\newtheorem{theorem}{Theorem}[section]
\newtheorem{lemma}[theorem]{Lemma}
\newtheorem{proposition}[theorem]{Proposition}
\newtheorem{corollary}[theorem]{Corollary}
\theoremstyle{definition}
\newtheorem{claim}[theorem]{Claim}
\newtheorem{example}[theorem]{Example}
\newtheorem{question}[theorem]{Question}
\newtheorem{conjecture}[theorem]{Conjecture}
\newtheorem{remark}[theorem]{Remark}
\newcommand{\Sym}{\text{Sym}}
\newcommand{\SL}{\text{SL}}
\newcommand{\PGL}{\text{PGL}}
\newcommand{\Hom}{\text{Hom}}
\newcommand{\Ad}{\text{Ad}}
\newcommand{\gr}{\text{gr}}
\newcommand{\Rep}{\text{Rep}}
\newcommand{\Spec}{\text{Spec}}
\newcommand{\Corep}{\text{Corep}}
\newcommand{\Vect}{\text{Vect}}
\newcommand{\Der}{\text{Der}}
\newcommand{\g}{\mathfrak{g}}
\newcommand{\uu}{\mathfrak{u}}
\newcommand{\h}{\mathfrak{h}}
\newcommand{\ot}{\otimes}
\newcommand{\ben}{\begin{enumerate}}
\newcommand{\een}{\end{enumerate}}
\newcommand{\ad}{{\text{ad\,}}}
\newcommand{\Lie}{{\text{Lie}}}
\begin{document}

\title[Twisting of affine algebraic groups, I] {Twisting of affine algebraic groups, I}


\author{Shlomo Gelaki}
\address{Department of Mathematics, Technion-Israel Institute of
Technology, Haifa 32000, Israel} \email{gelaki@math.technion.ac.il}

\date{\today}

\keywords{affine algebraic group; Hopf $2-$cocycle; tensor category;
fiber functor}

\begin{abstract}
We continue the study of twisting of affine algebraic groups $G$ (i.e., of Hopf $2-$cocycles $J$ for the function algebra $\mathcal{O}(G)$), which was started in \cite{EG1,EG2}, and initiate the study of the associated one-sided twisted function algebras $\mathcal{O}(G)_J$. We first show that $J$ is supported on a closed subgroup $H$ of $G$ (defined up to conjugation), and that $\mathcal{O}(G)_J$ is finitely generated with center $\mathcal{O}(G/H)$. We then use it to study the structure of $\mathcal{O}(G)_J$ for connected nilpotent $G$. We show that in this case $\mathcal{O}(G)_J$ is a Noetherian domain, which is a simple algebra if and only if $J$ is supported on $G$, and describe the simple algebras that arise in this way. We also use \cite{EG2} to obtain a classification of Hopf $2-$cocycles for connected nilpotent $G$, hence of fiber functors $\Rep(G)\to \Vect$. Along the way we provide many examples, and at the end formulate several ring-theoretical questions about the structure of the algebras $\mathcal{O}(G)_J$ for arbitrary $G$.
\end{abstract}

\maketitle

\section{Introduction}
The theory of twisting of finite groups was developed by Movshev in \cite{M}. The main achievement of this theory is the classification of twists (equivalently, {\em Hopf $2-$cocycles}) for finite groups. More precisely, Movshev proved that twists $J$ for a finite group $G$ correspond to pairs $(H,\omega)$, where $H$ is a central type subgroup of $G$ (= the {\em support} of $J$) and $\omega\in H^2(H,\mathbb{C}^*)$ is a nondegenerate $2-$cocycle (see also \cite{EG3}). In the language of tensor categories, this classification is equivalent to the classification of {\em fiber functors} $\Rep(G)\to \Vect$ of the {\em tensor category} $\Rep(G)$ of finite dimensional representations of $G$ \cite{EG3}. 

It is a natural problem to try to extend Movshev's theory to {\em affine algebraic groups} $G$ over $\mathbb{C}$, and classify Hopf $2-$cocycles for the function Hopf algebra $\mathcal{O}(G)$ of $G$. This was manifested in \cite{EG1} where, among other things, it is explained how to construct some Hopf $2-$cocycles for $\mathcal{O}(G)$ from solutions to the CYBE. Later on it was proved in \cite[Corollary 3.5]{EG2} that for unipotent $G$, Hopf $2-$cocycles $J$ for $\mathcal{O}(G)$ are classified by pairs $(H,\omega)$, where $H$ is a closed subgroup of $G$ (= the {\em support} of $J$) and $\omega\in \wedge^2\Lie(H)^*$ is a nondegenerate $2-$cocycle. However, a general classification of Hopf $2-$cocycles for affine algebraic groups is still unknown. 

Like in the finite case, a closely related problem is to classify fiber functors $\Rep(G)\to \Vect$ of the tensor category $\Rep(G)$ of finite dimensional rational representations of $G$. In the infinite case this problem generalizes (but is {\em not} equivalent to) the first one, since Hopf $2-$cocycles for $\mathcal{O}(G)$ correspond to {\em classical} fiber functors of $\Rep(G)$ (i.e., tensor functors which preserve dimensions). 
For $G=\SL_2$ it was done by Bichon \cite{B1,B2}, and for classical fiber functors for $G=\SL_3$ by Ohn \cite{O1,O2}.

Another aspect of Movshev's theory for finite groups involves the
study of the dual algebra $(\mathbb{C}[G]_J)^*$ to the one-sided twisted group coalgebra $\mathbb{C}[G]_J$. This algebra is semisimple, and it is simple (i.e., a matrix algebra) precisely when $J$ is supported on $G$ \cite{M}.
In the algebraic group case the (infinite dimensional) algebra $\mathcal{O}(G)_J$ (where the multiplication in $\mathcal{O}(G)$ is deformed from the right by $J$) is much more interesting and complicated, so it is natural to study its structure, find out whether it is a simple algebra if and only if $J$ is  supported on $G$ (as in the finite case), and determine the structure of the simple algebras which can be
realized in this way. One could hope that answering these questions would help to classify Hopf $2-$cocycles for $\mathcal{O}(G)$, as it did in the finite case. One of the main goals of this paper is to do so for connected nilpotent algebraic groups. Other types of affine algebraic groups (e.g., solvable, reductive) will be considered in future publications. Meanwhile, we also obtain some initial results for general affine algebraic groups.

The organisation of the paper is as follows. In Section 2 we recall some basic notions and results used in the sequel. 

In Section 3 we consider Hopf $2-$cocycles $J$ for arbitrary affine algebraic groups $G$ over $\mathbb{C}$. We first show in Theorem \ref{support} that $J$ is supported on a closed subgroup $H$ of $G$ (defined up to conjugation), i.e., that $J$ is gauge equivalent to a {\em minimal} Hopf $2-$cocycle for $\mathcal{O}(H)$. We then show in Proposition \ref{fg} that $\mathcal{O}(G)_J$ is finitely generated, and in Theorem \ref{trivcen} that its center is isomorphic to $\mathcal{O}(G/H)$. Consequently, if $J$ is minimal then the algebra $\mathcal{O}(G)_J$ has trivial center, and the converse holds provided that $G/H$ is an affine variety. 

In Section 4 we use Theorem \ref{trivcen} to study the structure of the algebra $\mathcal{O}(G)_J$ for connected nilpotent $G$. We start with tori in Theorem \ref{noethdomtori}, proceed with unipotent groups in Theorem \ref{noethdomunip}, and finally treat the general casen in Theorem \ref{noethdomnilp}. We show that $\mathcal{O}(G)_J$ is a Noetherian domain, and that the simple algebras which arise in this case are certain twisted group algebras of finitely generated abelian groups (for tori), Weyl algebras (for unipotent groups), and certain crossed products of them (for general $G$). In particular, we see that in the nilpotent case, $J$ is minimal if and only if the algebra $\mathcal{O}(G)_J$ has trivial center, if and only if, the algebra $\mathcal{O}(G)_J$ is simple. 

In Section 5 we use Theorem \ref{noethdomunip} to give in Theorem \ref{qfunip} and Corollary \ref{revis} an alternative proof (which does not use Etingof--Kazhdan quantization theory \cite{EK1,EK2,EK3}) to the classification of Hopf $2-$cocycles for unipotent algebraic groups given in \cite[Theorem 3.2]{EG2}. Then in Theorem \ref{nilp} and Proposition \ref{solcybe} we use the classification of unipotent fiber functors for affine algebraic groups given in \cite[Theorem 3.2]{EG2} to obtain a classification of Hopf $2-$cocycles for connected nilpotent algebraic groups in terms of solutions to the CYBE. 

In Section 6 we conclude the paper with some natural ring-theoretical questions on the structure of the algebras $\mathcal{O}(G)_J$ for arbitrary affine algebraic groups over $\mathbb{C}$, and answer them in the case where the support of $J$ is a finite subgroup of $G$.

{\bf Acknowledgments.} I am grateful to Pavel Etingof for stimulating and helpful discussions. Part of this work was done while I was on Sabbatical in the Department of Mathematics at the University of Oregon in Eugene; I am grateful for their warm hospitality. This research was partially supported by The Israel Science Foundation (grant No. 561/12).

\section{Preliminaries} 

Throughout the paper we shall work over the field $\mathbb{C}$ of complex numbers.

\subsection{Hopf $2-$cocycles}\label{hopftwococycle} 
Let $H$ be a Hopf algebra over $\mathbb{C}$. A linear map 
$J:H\ot H\to \mathbb{C}$
is called a {\em Hopf $2-$cocycle} for $H$ if it has an inverse
$J^{-1}$ under the convolution product $*$ in $\Hom _{\mathbb{C}} (H\ot
H,\mathbb{C})$, and satisfies the two conditions
\begin{align}\label{2coc}
\sum J (a_1b_1,c)J (a_2,b_2)&=\sum J
(a,b_1c_1)J (b_2,c_2),\\ J (a,1) =\varepsilon(a)&=J (1,a)\nonumber
\end{align}
for all $a,b,c\in H$ (see, e.g., \cite{d,ma}). 

Given a Hopf $2-$cocycle $J$ for $H$, one can construct a new Hopf
algebra $H^{J}$ as
follows. As a coalgebra $H^{J}=H$, and the new multiplication $m^{J}$ is given by
\begin{equation}\label{nm}
m^{J}(a\ot b):=\sum J^{-1} (a_1,b_1)a_2b_2J
(a_3,b_3)
\end{equation}
for all $a,b\in H$.
Equivalently, $J$ defines a tensor structure on the forgetful functor $\Corep(H)\to \Vect$, where $\Corep(H)$ is the tensor category of comodules over $H$.

\subsection{Cotriangular Hopf algebras}\label{cothopalg} Recall that $(H,R)$ is a {\em cotriangular} Hopf algebra if $R:H\ot H\to \mathbb{C}$ is an invertible linear map under $*$, such that 
\begin{itemize}
\item
$\sum R(h_1,g_1)R(g_2,h_2)=\varepsilon(g)\varepsilon(h)$ (i.e., $R^{-1}=R_{21}$),
\item 
$R(h,gl)=\sum R(h_1,g)R(h_2,l)$,
\item
$R(hg,l)=
\sum R(g,l_1)R(h,l_2)$, and
\item
$\sum R(h_1,g_1)g_2h_2=\sum h_1g_1R(h_2,g_2)$
\end{itemize}
for every $h,g,l\in H$. Given a Hopf $2-$cocycle $J$ for $H$, $(H^{J},R^J)$ is also cotriangular, where $R^{J}:=J_{21}^{-1}*R*J$.

\begin{proposition}\label{minimal}
Let $(H,R)$ be a cotriangular Hopf algebra, and let $I:=\{a\in H\mid R(b,a)=0,\,b\in H\}$ be the right radical of $R$. Then $I$ coincides with the left radical $\{b\in H\mid R(b,a)=0,\,a\in H\}$ of $R$, and is a Hopf ideal of $H$.
\end{proposition}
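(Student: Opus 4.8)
The plan is to work directly from the four cotriangularity axioms, treating $R$ as a bilinear pairing $H \times H \to \mathbb{C}$ and exploiting the interplay between its multiplicativity properties and the quasi-cocommutativity relation (the fourth bullet). First I would observe that the second axiom, $R(h, gl) = \sum R(h_1, g) R(h_2, l)$, says that for fixed left slot the functional $R(h, -)$ behaves multiplicatively, which immediately implies that the right radical $I = \{a : R(b,a) = 0 \text{ for all } b\}$ is a \emph{right} ideal: if $a \in I$ then $R(h, al) = \sum R(h_1, a) R(h_2, l) = 0$ and similarly $R(h, la) = \sum R(h_1, l) R(h_2, a) = 0$, so $Ha, aH \subseteq I$. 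Symmetrically, the third axiom shows the left radical is a two-sided ideal. So the substantive content is (i) that the two radicals coincide and (ii) that $I$ is a \emph{coideal} (hence a Hopf ideal once we also check $S(I) \subseteq I$, which follows formally from the antipode being an anti-coalgebra map together with $R \circ (S \otimes \id) = R^{-1}$-type identities, or can be deduced from $I$ being a coideal and a bi-ideal in a Hopf algebra).

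For the equality of the two radicals, the key device is the identity $R^{-1} = R_{21}$ (first bullet): $\sum R(h_1, g_1) R(g_2, h_2) = \varepsilon(g)\varepsilon(h)$. I would use this to show that $R$, viewed as a map $H \to H^*$ (or better, as a pairing descending to $H/I$), is ``nondegenerate on both sides'' in a compatible way. Concretely, suppose $b$ lies in the left radical, i.e. $R(b, a) = 0$ for all $a$. I want to show $R(c, b) = 0$ for all $c$. The natural move is to feed $b$ into the first bullet in the second slot: $\sum R(h_1, b_1) R(b_2, h_2) = \varepsilon(b)\varepsilon(h)$; but $R(b_2, h_2)$ need not vanish since $b_2$ is not $b$. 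This is where the coideal property must be bootstrapped simultaneously: one shows by a combined argument that $\Delta(I) \subseteq I \otimes H + H \otimes I$, so that modulo $I \otimes I$-type corrections the components $b_1, b_2$ of an element $b \in I$ can be taken to lie in $I$. I expect the cleanest route is to pass to the quotient coalgebra $\bar H := H/(\text{left radical})$ and show $R$ descends to a nondegenerate pairing there, then argue the left and right radicals of a pairing on a Hopf algebra satisfying these axioms must agree by a dimension/duality argument applied to finite-dimensional subcoalgebras (every element of $H$ generates a finite-dimensional subcoalgebra, so one reduces to the finite-dimensional case where nondegeneracy of a pairing on the left forces it on the right).

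The main obstacle, and the step I would spend the most care on, is establishing the coideal property $\Delta(I) \subseteq I \otimes H + H \otimes I$ in tandem with the radical-equality, since these two facts are genuinely entangled: the proof that $b_1$ or $b_2 \in I$ seems to need that left and right radicals coincide, while the symmetry argument for the radicals seems to want the coideal property. I would resolve this by restricting to a finite-dimensional subcoalgebra $C \subseteq H$ containing a given element $a$: on $C \otimes C$ the pairing $R$ has a well-defined left kernel $I_C^{\ell}$ and right kernel $I_C^r$, these have equal codimension in $C$ by linear algebra, and the relation $R^{-1} = R_{21}$ forces $I_C^{\ell} = I_C^r$ because it identifies the pairing with (a twist of) its own transpose; then $I \cap C = I_C^r$ for $C$ large enough, and coideal-ness of $I$ follows because $\Delta(C) \subseteq C' \otimes C'$ for a slightly larger finite-dimensional subcoalgebra $C'$ and the kernels are compatible under $\Delta$. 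Finally, $S(I) \subseteq I$: since $I$ is a coideal and a two-sided ideal with $H/I$ a bialgebra, and $H/I$ is automatically a Hopf algebra (a quotient bialgebra of a Hopf algebra by a coideal bi-ideal inherits an antipode provided the ideal is an $S$-stable, which here follows from $S$ being the inverse of $\id$ under convolution applied within the finite-dimensional subcoalgebras), we conclude $I$ is a Hopf ideal. I would close the argument by noting that $H/I$ with the induced $\bar R$ is a cotriangular Hopf algebra with trivial radical, i.e. $\bar R$ is nondegenerate, which is the ``minimality'' alluded to in the surrounding text.
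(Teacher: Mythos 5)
Your plan identifies the right ingredients but stalls exactly at the step you yourself flag as the crux, and the machinery you build around it (finite-dimensional subcoalgebras, a simultaneous bootstrap of radical-equality and coideal-ness) does not close the gap. Equal codimension of the left and right kernels of a pairing on a finite-dimensional space is just ``row rank equals column rank'' and says nothing about the two subspaces being \emph{equal}; you acknowledge this and appeal to ``$R^{-1}=R_{21}$ identifies the pairing with (a twist of) its own transpose,'' but $R^{-1}=R_{21}$ is a statement about convolution inverses, and converting it into a pointwise transpose relation is precisely the missing idea. The paper's entire proof is the identity
\[
R(a,b)=R^{-1}(b,a)=R(S(b),a),
\]
where the first equality is cotriangularity and the second is the standard coquasitriangular identity $R^{-1}=R\circ(S\ot\id)$ --- an identity you do mention, but only in a parenthetical about $S$-stability of $I$, not where it is actually needed. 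With it, $R(\cdot,a)\equiv 0$ immediately gives $R(a,\cdot)\equiv 0$, and the reverse inclusion follows from the companion identity $R(b,a)=R(a,S(b))$ (equivalently, from bijectivity of $S$). No finite-dimensional reduction or dimension count is required, and without the displayed identity your reduction cannot conclude $I_C^{\ell}=I_C^{r}$.

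Relatedly, your premise that the radical equality and the coideal property are ``genuinely entangled'' is incorrect, and it is what pushes you into the complicated simultaneous argument. The coideal property of the right radical follows from the third axiom alone: write $\Delta(a)=\sum_i b_i\ot c_i$ plus terms in $H\ot I$ with the $c_i$ linearly independent modulo $I$; then $0=R(hg,a)=\sum_i R(g,b_i)R(h,c_i)$ for all $g,h$, and the functionals $R(-,c_i)$ are linearly independent precisely because the $c_i$ are independent modulo the right radical, forcing each $b_i\in I$. So each claim has a short self-contained proof. The parts of your proposal that are solid are the observation that axioms 2 and 3 make each radical a two-sided ideal, and the concluding remarks about $S(I)\subseteq I$ and the induced nondegenerate $\bar R$ on $H/I$; but the central step needs the antipode identity stated and used explicitly rather than gestured at.
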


\begin{proof}
Since $R(a,b)=R^{-1}(b,a)=R(S(b),a)$, we see that $I$ coincides with the left radical of $R$, and the claims follow.
\end{proof}

A cotriangular Hopf algebra $(H,R)$ such that $R$ is nondegenerate (i.e., $I=0$) is called {\em minimal}. By Proposition \ref{minimal}, any cotriangular Hopf algebra $(H,R)$ has a unique minimal cotriangular Hopf algebra quotient, which we shall denote by $(H_{min},R)$. 

Note that in the minimal case, the nondegenerate form $R$ defines two injective Hopf algebra maps $H\xrightarrow{1-1} H^*_{\rm fin}$ from $H$ into its finite dual Hopf algebra $H^*_{\rm fin}$.

\subsection{Ore extensions}\label{oreext} Let $A$ be a unital associative ring, and let $\delta$ be a derivation of $A$ (i.e., $\delta:A\to A$ is a homomorphism of abelian groups such that $\delta(ab)=\delta(a)b+a\delta(b)$ for every $a,b\in A$). Recall that the {\em Ore extension} of $A$ with respect to $\delta$ is defined to be the ring $A[z;\delta]$ generated over $A$ by $z$ subject to the relations $za-az=\delta(a)$ for every $a\in A$.

\subsection{Quasi-Frobenius Lie algebras}\label{drinres} Recall that a Lie algebra $\h$ is called {\em quasi-Frobenius with symplectic form $\omega$} if $\omega\in\wedge^2\h^*$ is a {\em nondegenerate} 2-cocycle, i.e., $\omega:\h\times \h\to \mathbb{C}$ is a nondegenerate skew-symmetric bilinear form satisfying
\begin{equation}\label{2cocy}
\omega([x,y],z)+\omega([z,x],y)+\omega([y,z],x)=0
\end{equation}
for every $x,y,z\in \h$.

Recall that an element $r\in \wedge^2 \g$ is a solution to the classical Yang-Baxter equation (CYBE) if
\begin{equation}\label{cybe}
[r_{12},r_{13}]+[r_{12},r_{23}]+[r_{13},r_{23}]=0.
\end{equation}
By Drinfeld \cite{dr}, solutions $r$ to the CYBE (\ref{cybe}) in $\wedge^2\g$
are classified by pairs $(\h,\omega)$, via $r=\omega^{-1}\in \wedge^2\h$, where $\h\subseteq \g$ is a quasi-Frobenius Lie
subalgebra with symplectic form $\omega$. 

\subsection{Central extensions of Lie algebras}\label{rext}
A $1-$dimensional central extension $\hat{\mathfrak{g}}$ of a Lie algebra $\mathfrak{g}$ is a short exact sequence of Lie algebras $$0\to \mathbb{C}\to \hat{\mathfrak{g}}\to \mathfrak{g}\to 0.$$ Recall that equivalence classes of such extensions are in one to one correspondence with the elements of the group $H^2(\mathfrak{g},\mathbb{C})$. Given a $2-$cocycle $\omega\in \wedge^2 \mathfrak{g}^*$, the bracket on $\hat{\mathfrak{g}}$ is given by $$[(x,\alpha ),(y,\beta )]=([x,y],\omega(x,y)),$$
for every $x,y\in \g$ and $\alpha,\beta\in\mathbb{C}$.

\section{Movshev's theory for affine algebraic groups} 

Let $G$ denote an affine algebraic group over $\mathbb{C}$, and let $J$ be a Hopf $2-$cocycle for $\mathcal{O}(G)$ (see Section \ref{hopftwococycle}). In this section we introduce the support of $J$ in $G$, and describe the center of the infinite dimensional algebra $\mathcal{O}(G)_J$ in terms of it.

\subsection{The support of $J$} Let us say that $J$ is {\em minimal} if the cotriangular Hopf algebra $(\mathcal{O}(G)^J,R^{J})$ is minimal (see Section 2.2).

\begin{theorem}\label{support} 
There exist a closed subgroup $H$ of $G$ and a minimal Hopf $2-$cocycle $J'$ for $\mathcal{O}(H)$ such that $J$ is gauge equivalent to $J'$.
\end{theorem}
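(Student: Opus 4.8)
The plan is to extract the subgroup $H$ from the minimal quotient Hopf algebra of $(\mathcal{O}(G)^J, R^J)$ provided by Proposition~\ref{minimal}, and then recognize this minimal quotient as a twist of $\mathcal{O}(H)$ for a closed subgroup $H \subseteq G$. First I would form the cotriangular Hopf algebra $(\mathcal{O}(G)^J, R^J)$ with $R^J = J_{21}^{-1} * \varepsilon \otimes \varepsilon$-type form (here $R = \varepsilon \otimes \varepsilon$ since $\mathcal{O}(G)$ is commutative, so $R^J = J_{21}^{-1} * J$), and let $I$ be its radical. By Proposition~\ref{minimal}, $I$ is a Hopf ideal, so $(\mathcal{O}(G)^J)_{min} := \mathcal{O}(G)^J / I$ is a minimal cotriangular Hopf algebra, equipped with the nondegenerate form induced by $R^J$. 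The key point is to untwist: applying the inverse twist to $(\mathcal{O}(G)^J)_{min}$ should produce a commutative Hopf algebra. Indeed, $I$ is a coideal of the coalgebra $\mathcal{O}(G)^J = \mathcal{O}(G)$ (the coalgebra structure is unchanged by twisting), hence $\mathcal{O}(G)/I$ is a quotient coalgebra of $\mathcal{O}(G)$; one checks that $J$ descends (or that $I$ is compatible with the relevant structures) so that $(\mathcal{O}(G)/I)^{J^{-1}\text{-untwist}}$ is a commutative Hopf algebra quotient of $\mathcal{O}(G)$, i.e.\ it equals $\mathcal{O}(H)$ for some closed subgroup $H \leq G$.

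The main steps, in order: (1) Show $I$ is stable under the antipode and is a two-sided coideal, so that $B := \mathcal{O}(G)^J/I$ is a Hopf algebra quotient and $(B, R^J)$ is minimal. (2) Show that, as a coalgebra, $B$ is a quotient of the coalgebra $\mathcal{O}(G)$, and that the image $\bar J$ of $J$ in $\Hom(B \otimes B, \mathbb{C})$ is still a Hopf $2$-cocycle (using that $I$ lies in the radical, so the cocycle values are well-defined on the quotient). (3) Untwist $B$ by $\bar J^{-1}$: the resulting Hopf algebra $B^{\bar J^{-1}}$ has the same coalgebra as $B$, hence is a quotient coalgebra of $\mathcal{O}(G)$, and its $R$-matrix becomes trivial ($\varepsilon \otimes \varepsilon$), which forces commutativity — since a Hopf algebra with nondegenerate trivial cotriangular form that is also a quotient of a commutative Hopf algebra must be commutative, indeed the condition $R^{-1} = R_{21}$ together with minimality of the original and the structure of $\mathcal{O}(G)$ pins this down. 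Therefore $B^{\bar J^{-1}} = \mathcal{O}(H)$ as Hopf algebras for a closed subgroup $H \leq G$ corresponding to the surjection $\mathcal{O}(G) \twoheadrightarrow B^{\bar J^{-1}}$. (4) Set $J' := \bar J$ viewed as a Hopf $2$-cocycle for $\mathcal{O}(H) = B^{\bar J^{-1}}$; then $\mathcal{O}(H)^{J'} = B$ is minimal by construction, so $J'$ is minimal. (5) Finally, assemble a gauge transformation: the surjection $\pi: \mathcal{O}(G) \to \mathcal{O}(H)$ of Hopf algebras, together with $J' \circ (\pi \otimes \pi)$, recovers $J$ up to gauge equivalence — this requires showing that twisting $J$ back down through $\pi$ and comparing with the original is a coboundary, which follows because $J$ and $J' \circ (\pi \otimes \pi)$ induce the same (up to iso) tensor structure on $\Corep$, or more directly because the radical $I$ is precisely the kernel of the map to the minimal model and the two cocycles agree modulo this kernel.

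The dependence on conjugation: the closed subgroup $H$ arises from the Hopf ideal $I$, which is canonically attached to the cotriangular structure $(\mathcal{O}(G)^J, R^J)$; but $R^J$ depends on the choice of gauge representative $J$ within its gauge class, and different choices conjugate $I$, hence conjugate $H$. I would remark on this but the theorem as stated only asserts existence, so step (5) is the crux. I expect the main obstacle to be step (3)/(5): verifying that the untwisted minimal quotient is genuinely a \emph{Hopf algebra} quotient of $\mathcal{O}(G)$ (not merely a coalgebra quotient) and that it is commutative, and then producing an honest gauge equivalence rather than just a coincidence of fiber functors. The commutativity should follow from the fact that the four cotriangular axioms with $R = \varepsilon \otimes \varepsilon$ reduce the last axiom $\sum R(h_1,g_1) g_2 h_2 = \sum h_1 g_1 R(h_2,g_2)$ to $gh = hg$; the subtlety is ensuring the untwisting operation preserves the algebra structure compatibly, which I would handle by working throughout with the dual picture of $\Corep(\mathcal{O}(G)) = \Rep(G)$ and its fiber functors, where minimality of $(\mathcal{O}(G)^J)_{min}$ corresponds exactly to faithfulness/essential surjectivity onto a Tannakian subcategory $\Rep(H)$.
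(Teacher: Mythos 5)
Your opening move coincides with the paper's: form $(\mathcal{O}(G)^J,R^J)$ with $R^J=J_{21}^{-1}*J$ and pass to the minimal cotriangular quotient $B:=\mathcal{O}(G)^J/I$ via Proposition \ref{minimal}. But your step (2) --- descending $J$ itself to a cocycle $\bar J$ on $B$ and then untwisting --- has a genuine gap. The ideal $I$ is the radical of $R^J=J_{21}^{-1}*J$, not of $J$; since $J$ is convolution-invertible there is no reason for $J$ to vanish on $I\ot\mathcal{O}(G)+\mathcal{O}(G)\ot I$, so $\bar J$ is not well defined on $B\ot B$. (Already for finite groups this fails: a twist supported on a subgroup $H<G$ is only \emph{gauge equivalent} to one living on $H$; the original $J$ itself need not project there.) Without $\bar J$, steps (3)--(4) collapse: there is no canonical untwisting of $B$. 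And even granting some untwist, $B^{\bar J^{-1}}$ would only be a quotient \emph{coalgebra} of $\mathcal{O}(G)$ --- the quotient map $\mathcal{O}(G)^J\to B$ is an algebra map for the twisted product, not the original one --- so identifying it with $\mathcal{O}(H)$ for a \emph{closed subgroup} $H$ of $G$ still requires an argument. Finally, your step (5) is asserted rather than proved: the two cocycles do not ``agree modulo the kernel,'' and the isomorphism of the two tensor structures on the forgetful functor is precisely the content of the theorem.

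The paper sidesteps all of this by staying categorical: the restriction functor $\Corep(\mathcal{O}(G)^J)\to\Corep(B)$ is a surjective symmetric tensor functor onto a symmetric tensor category that still carries a fiber functor, so by the Tannakian argument of \cite[Proposition 6.5]{G} (cf.\ \cite{Be}) one gets $\Corep(B)\simeq\Corep(\mathcal{O}(H))$ for a closed subgroup $H$ of $G$, and then \cite[Proposition 6.7]{G} converts this equivalence into a Hopf $2$-cocycle $J'$ for $\mathcal{O}(H)$ together with an isomorphism of cotriangular Hopf algebras $(\mathcal{O}(G)^{J'},R^{J'})\cong(\mathcal{O}(G)^J,R^J)$, which is exactly the gauge equivalence. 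If you want to salvage your outline, replace steps (2)--(5) by this reconstruction of $(H,J')$ from the fiber functor on $\Corep(B)$ and an explicit comparison of the two fiber functors on $\Rep(G)$.
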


\begin{proof}
Let $(\mathcal{O}(G)^{J}_{min},R^{J})$ be the minimal cotriangular Hopf algebra quotient of $(\mathcal{O}(G)^{J},R^J)$ (see Section 2.2). Clearly, the restriction functor $$\Corep(\mathcal{O}(G)^{J})\to \Corep(\mathcal{O}(G)^{J}_{min})$$ is a surjective symmetric tensor functor. Reasoning as in the proof of \cite[Proposition 6.5]{G} (see also \cite[Proposition 1]{Be}), we get that $\Corep(\mathcal{O}(G)^{J}_{min})$ is equivalent to $\Corep(\mathcal{O}(H))$, as a symmetric tensor category, for some closed subgroup $H$ of $G$ (defined up to conjugation). Reasoning now as in the proof of \cite[Proposition 6.7]{G} we see that such an equivalence functor gives rise to a Hopf $2-$cocycle $J'$ for $\mathcal{O}(H)$, and an isomorphism of cotriangular Hopf algebras $$(\mathcal{O}(G)^{J'},R^{J'})\xrightarrow{\cong}(\mathcal{O}(G)^{J},R^J)$$ (viewing $J'$ as a Hopf $2-$cocycle for $\mathcal{O}(G)$), which implies that $J$ is gauge equivalent to $J'$.
\end{proof}

The closed subgroup $H$ of $G$ (defined up to conjugation) will be called the {\em support} of the Hopf $2-$cocycle $J$. We shall say that $J$ has {\em finite rank} if its support $H$ is a finite subgroup of $G$ (equivalently, $J$ becomes a form of finite rank after a gauge transformation). 

\begin{remark}
In the language of \cite{G}, Hopf $2-$cocycles for $G$ of finite rank correspond to {\em geometrical} fiber functors $\Rep(G)\to \Vect$ of $\Rep(G)$.
\end{remark} 

\subsection{The algebra $\mathcal{O}(G)_J$} Let $\mathcal{O}(G)_J$ be the twisted-on-the-right function algebra of $G$, and let $_{J^{-1}} \mathcal{O}(G)$ be the twisted-on-the-left function algebra of $G$. Both $\mathcal{O}(G)_J$ and $_{J^{-1}} \mathcal{O}(G)$ are {\em torsor $G-$algebras}, i.e., they are unital associative algebras with multiplication rule
\begin{equation}\label{multj}
m_J(a\ot b)=\sum a_1b_1J(a_2,b_2)
\end{equation}
and
\begin{equation}\label{multjinv}
_{J^{-1}} m(a\ot b)=\sum J^{-1}(a_1,b_1)a_2b_2
\end{equation} 
respectively, equipped with a $G-$action such that $\mathcal{O}(G)_J$ and $_{J^{-1}} \mathcal{O}(G)$ are isomorphic to the regular representation of $G$ as $G-$modules. 

\begin{remark}
If $G$ is a {\em finite} group then the algebras $\mathcal{O}(G)_J$ and $_{J^{-1}} \mathcal{O}(G)$ are semisimple, and $(\mathcal{O}(G)_J)^{op}\cong _{J^{-1}}\mathcal{O}(G)$ \cite{M}. It is known that $\mathcal{O}(G)_J$ is simple if and only if $J$ is minimal, and that in this case $G$ is a group of central type (\cite{M}, \cite{EG3}).
It is thus natural to study ring-theoretical properties of the (infinite
dimensional) algebra $\mathcal{O}(G)_J$, e.g., to find out when it is a Noetherian domain, and when it is simple (and what is the structure of the simple algebras which can be realized in this way).
\end{remark}

\begin{proposition}\label{fg}
The algebra $\mathcal{O}(G)_J$ is finitely generated.
\end{proposition}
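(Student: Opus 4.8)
The plan is to exhibit a single finite-dimensional subspace of $\mathcal{O}(G)$ that already generates $\mathcal{O}(G)_J$ as an algebra under the twisted product $m_J$ of \eqref{multj}. Since $G$ is a linear algebraic group, $\mathcal{O}(G)$ is generated as an ordinary algebra by the matrix coefficients of a faithful finite-dimensional rational representation together with the reciprocal of its determinant; in particular --- and this is all that will be used --- there is a finite-dimensional \emph{subcoalgebra} $C\subseteq\mathcal{O}(G)$ with $1\in C$ such that $C$ generates $\mathcal{O}(G)$ under the usual multiplication. (Alternatively, one takes a finite set of algebra generators of the finitely generated algebra $\mathcal{O}(G)$ and enlarges it to such a $C$ by the fundamental theorem of coalgebras.) For $k\ge 0$ let $F_k\subseteq\mathcal{O}(G)$ be the span of all products $c_1\cdots c_j$ with $j\le k$ and $c_i\in C$, so that $F_0=\mathbb{C}1\subseteq F_1=C\subseteq F_2\subseteq\cdots$, $\bigcup_k F_k=\mathcal{O}(G)$, and $F_iF_j\subseteq F_{i+j}$ for the ordinary product. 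The key structural point is that each $F_k$ is again a subcoalgebra: since $\Delta$ is an algebra homomorphism and $\Delta(C)\subseteq C\otimes C$, expanding $\Delta(c_1\cdots c_j)=\Delta(c_1)\cdots\Delta(c_j)$ shows $\Delta(F_k)\subseteq F_k\otimes F_k$.

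Let $B$ be the subalgebra of $\mathcal{O}(G)_J$ generated by $C$; I claim $B=\mathcal{O}(G)$. Suppose not, and let $k\ge 2$ be minimal with $F_k\not\subseteq B$, so that $F_{k-1}\subseteq B$. Fix $a\in F_{k-1}$ and $c\in C$; I will show the ordinary product $ac$ lies in $B$. This rests on the identity
\[
ac=\sum m_J(a_1\otimes c_1)\,J^{-1}(a_2,c_2),
\]
obtained by substituting $m_J(a_1\otimes c_1)=\sum (a_1)_1(c_1)_1\,J((a_1)_2,(c_1)_2)$ from \eqref{multj}, reindexing by coassociativity so that the factors entering $m_J$ are the earlier legs and those entering $J^{-1}$ the later ones, then using $\sum J(x_1,y_1)J^{-1}(x_2,y_2)=\varepsilon(x)\varepsilon(y)$ (the defining property of the convolution inverse of $J$ on $\mathcal{O}(G)\otimes\mathcal{O}(G)$) and collapsing the remaining legs with the counit. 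Because $F_{k-1}$ and $C$ are subcoalgebras contained in $B$, in every term $a_1\in B$ and $c_1\in B$, hence $m_J(a_1\otimes c_1)\in B$ and therefore $ac\in B$. Since $F_k=F_{k-1}+F_{k-1}C$ for the ordinary product, this forces $F_k\subseteq B$, a contradiction. Thus $B=\mathcal{O}(G)$, and $\mathcal{O}(G)_J$ is generated by the finite-dimensional subspace $C$, hence finitely generated.

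The only genuine content is the displayed identity together with the fact that the $F_k$ are subcoalgebras; the remainder is an induction and bookkeeping. The one spot requiring care is keeping the Sweedler indices straight when expanding $m_J(a_1\otimes c_1)$ and applying coassociativity, so that the telescoping with $J*J^{-1}=\varepsilon\otimes\varepsilon$ is legitimate; I expect this to be the main (and only modest) obstacle, as there is no analytic or deep structural difficulty. The same argument applies verbatim to $_{J^{-1}}\mathcal{O}(G)$, using $J$ in place of $J^{-1}$ in the inversion step.
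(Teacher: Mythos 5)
Your proof is correct and is essentially the paper's argument: the same identity $ac=\sum a_1\cdot c_1\,J^{-1}(a_2,c_2)$ (a direct consequence of \eqref{multj} and $J*J^{-1}=\varepsilon\otimes\varepsilon$) drives the same induction on the length of monomials in a finite set of ordinary algebra generators of $\mathcal{O}(G)$. Your only refinement is to package the generating set as a finite-dimensional subcoalgebra $C$ with the product filtration $F_k$, rather than the paper's ``components of the comultiplications $\Delta(f_i)$,'' which makes the Sweedler bookkeeping in the induction airtight but does not change the substance.
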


\begin{proof}
It is straightforward to verify that if $f_1,\dots,f_n$ are generators of the algebra $\mathcal{O}(G)$ then 
$\mathcal{O}(G)_J$ is generated by the components of their comultiplications $\Delta(f_1),\dots,\Delta(f_n)$. Indeed, it suffices to verify that every monomial in $f_1,\dots,f_n$, in the algebra $\mathcal{O}(G)$, is a linear combination of monomials in the components of $\Delta(f_1),\dots,\Delta(f_n)$, in the algebra $\mathcal{O}(G)_J$. But this follows by simple induction using the fact that by (\ref{multj}), $fg=\sum f_1\cdot g_1J^{-1}(f_2,g_2)$ for every $f,g\in \mathcal{O}(G)$, where $\cdot$ denotes the product in $\mathcal{O}(G)_J$.
\end{proof}


\subsection{The center of $\mathcal{O}(G)_J$} 
Let $Z$ be the center of $\mathcal{O}(G)_J$; it is a commutative algebra with a $G-$action, so $\Spec(Z)$ is an affine scheme with a $G-$action. The next theorem shows that $\Spec(Z)$ is a $G-$homogeneous space with the support of $J$ being the stabilizer of a closed point.

\begin{theorem}\label{trivcen}
The center $Z$ of the algebra $\mathcal{O}(G)_J$ is isomorphic to $\mathcal{O}(G/H)$, where $H$ is the support of $J$. In particular, if $J$ is minimal then  the center of $\mathcal{O}(G)_{J}$ is trivial, and if $G/H$ is affine then the converse holds too.
\end{theorem}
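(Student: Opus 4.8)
The plan is to identify the center $Z$ of $\mathcal{O}(G)_J$ with the invariants under an appropriate coaction, and then relate those invariants to $\mathcal{O}(G/H)$ using the minimal cotriangular structure established in Theorem \ref{support}. First I would observe that since $\mathcal{O}(G)_J$ and $\mathcal{O}(G)^J$ differ only by a gauge transformation of $J$, which does not change the support, I may assume $J$ is minimal from the start and must then show $Z$ is trivial when $G/H$ is affine, and in general that $Z\cong\mathcal{O}(G/H)$. The key structural fact I would exploit is that the multiplication \eqref{multj} makes $\mathcal{O}(G)_J$ a module algebra (in fact a torsor) over the cotriangular Hopf algebra $\mathcal{O}(G)^J$: an element $f\in\mathcal{O}(G)_J$ is central precisely when $\sum f_1 a_2 R^J(f_2,a_1)$-type identities hold, i.e. when $f$ lies in the part of $\mathcal{O}(G)$ on which the $R^J$-form acts trivially from one side. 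Concretely, $f$ is central in $\mathcal{O}(G)_J$ iff $m_J(a\otimes f)=m_J(f\otimes a)$ for all $a$, and expanding both sides via \eqref{multj} and using that $R^J=J_{21}^{-1}*R*J$ with $R=\varepsilon\otimes\varepsilon$ trivial for a commutative Hopf algebra, this should reduce to the condition that $\sum R^J(f_1,a_1)\,a_2 f_2 = \sum R^J(f_2,a_2)\, f_1 a_1$, and then via the last cotriangularity axiom, to $f$ being in the coinvariants of $\mathcal{O}(G)$ under the coaction factoring through $\mathcal{O}(G)^J_{min}\cong\mathcal{O}(H)$.

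The second step is to make precise the claim that $Z=\mathcal{O}(G)^{\,\mathrm{co}\,\mathcal{O}(H)}$, the coinvariants of the quotient Hopf algebra map $\mathcal{O}(G)\twoheadrightarrow\mathcal{O}(H)$ dual to the inclusion $H\hookrightarrow G$. For affine algebraic groups this coinvariant subalgebra is exactly $\mathcal{O}(G/H)$ when $G/H$ is affine (and in general $\mathcal{O}(G/H)$ in the scheme-theoretic sense, by faithfully flat descent / the theory of quotients of affine group schemes — e.g. the standard identification $\mathcal{O}(G)^{\,\mathrm{co}\,\mathcal{O}(H)}=\mathcal{O}(G/H)$). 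So I would first prove the inclusion $Z\subseteq \mathcal{O}(G)^{\,\mathrm{co}\,\mathcal{O}(H)}$ by the centrality computation above, using minimality of $R^J$ on the quotient to detect that the relevant two-sided condition forces $f$ to be pulled back from $G/H$. Then for the reverse inclusion: if $f$ is pulled back from $G/H$, the relevant comultiplication components $f_2$ (or $f_1$) already lie in $\mathcal{O}(H)$ trivially, hence $R^J(f_2,a_1)=\varepsilon(f_2)\varepsilon(a_1)$ on the nose, so \eqref{multj} collapses to ordinary (commutative) multiplication and $f$ is manifestly central. The "In particular" assertions then follow immediately: minimal means $H=G$ so $\mathcal{O}(G/H)=\mathbb{C}$; and conversely if $Z=\mathbb{C}$ and $G/H$ is affine then $\mathcal{O}(G/H)=\mathbb{C}$ forces $G/H$ to be a point, i.e. $H=G$, i.e. $J$ is minimal.

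The main obstacle I anticipate is the precise bookkeeping in the centrality computation — disentangling which leg of $R^J$ annihilates and translating "central in $\mathcal{O}(G)_J$" into "coinvariant under $\mathcal{O}(H)$" rigorously rather than heuristically. One has to be careful that $R^J$ being nondegenerate on $\mathcal{O}(G)^J_{min}=\mathcal{O}(H)$ is what lets one conclude that the vanishing of the pairing against all of $\mathcal{O}(G)$ is equivalent to vanishing after projecting to $\mathcal{O}(H)$; this is where Proposition \ref{minimal} (left radical $=$ right radical) and the explicit formula $R^J=J_{21}^{-1}*R*J$ get used. A secondary technical point is handling the case where $G/H$ is not affine: there $\mathcal{O}(G/H)$ must be interpreted as the coinvariant ring (equivalently global functions on the quotient scheme), and one should remark that the isomorphism $Z\cong\mathcal{O}(G/H)$ still holds at this level of generality, with the converse to the triviality statement genuinely requiring the affineness hypothesis (otherwise $\mathcal{O}(G/H)$ could be trivial with $H\subsetneq G$). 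I would close by noting that $G$-equivariance of all maps involved is automatic, since the $G$-action on $\mathcal{O}(G)_J$ is by the left regular action which is an algebra automorphism action commuting with the construction.
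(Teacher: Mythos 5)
Your overall architecture --- replace $J$ by the gauge-equivalent pullback of a minimal cocycle on $\mathcal{O}(H)$ via Theorem \ref{support}, show that centrality forces a function to be constant on left $H$-cosets, identify those functions with $\mathcal{O}(G/H)$, and check the easy reverse inclusion $\mathcal{O}(G/H)\subseteq Z$ --- is the same as the paper's, and your handling of the reverse inclusion and of the two ``in particular'' statements is fine. But the pivotal implication, centrality $\Rightarrow$ coinvariance, is precisely the step you flag as an ``anticipated obstacle,'' and the mechanism you sketch for it does not work as stated. The commutator in $\mathcal{O}(G)_J$ is $\varphi\cdot\psi-\psi\cdot\varphi=\sum\varphi_1\psi_1\,Q(\varphi_2,\psi_2)$ with $Q:=J-J_{21}$; the last cotriangularity axiom you invoke governs the two-sided twisted product $m^J$ of the Hopf algebra $\mathcal{O}(G)^J$, not the one-sided product $m_J$, so it does not convert this condition into ``one leg of $R^J$ annihilates $f$.'' Even after rewriting the condition in terms of $R^J$ (using $(R^J)^{-1}=J^{-1}\ast J_{21}$) one is left with an identity of the shape $m_J(\varphi\ot\psi)=\sum m_J(\varphi_1\ot\psi_1)R^J(\psi_2,\varphi_2)$, from which coinvariance cannot be read off because $m_J$ cannot be cancelled.

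The paper bridges this gap with two specific devices that are absent from your sketch. First, it restricts to $\mathcal{O}(H)_J$ (the algebra epimorphism $\mathcal{O}(G)_J\twoheadrightarrow\mathcal{O}(H)_J$ carries $Z$ into the center of $\mathcal{O}(H)_J$) and evaluates the commutator identity at the identity $e\in H$; this kills the factor $\varphi_1\psi_1$ and isolates $Q(\varphi,\psi)=0$, i.e. $J=J_{21}$ on $\mathcal{O}(H)\ot Z$. Second, it uses that $Z$ is $G$-stable, hence a left coideal, so that this identity propagates to the second tensor legs and yields, by convolution, $R^J=J_{21}^{-1}\ast J=\varepsilon\ot\varepsilon$ on $\mathcal{O}(H)\ot Z$; only at this point does nondegeneracy of $R^J$ (via Proposition \ref{minimal}) force $\varphi=\varphi(e)1$ on $H$, and translating by arbitrary $g\in G$ (again using $G$-stability of $Z$) gives constancy on every coset $gH$. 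Without the evaluation-at-$e$ step and the coideal property of $Z$, your argument has no route from the pointwise commutator condition to a statement about the radical of $R^J$, so as written the proof is incomplete at its core.
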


\begin{proof}
Let us first show that the algebra $\mathcal{O}(H)_{J}$ has trivial center. Indeed, if $\varphi$ is central in $\mathcal{O}(H)_{J}$ then for every $\psi\in \mathcal{O}(H)_{J}$, $$0=\varphi \cdot\psi-\psi\cdot\varphi=\sum \varphi_1\psi_1(J-J_{21})(\varphi_2,\psi_2).$$ Evaluating at the unit element $e\in H$, we get $J(\varphi,\psi)=J_{21}(\varphi,\psi)$, so $J=J_{21}$ on $\mathcal{O}(H)\ot Z$.
 
Since $Z$ is $H-$stable, $h\varphi=\sum \varphi_1(h)\varphi_2\in Z$ for every $h\in H$, and hence $Z$ is a left coideal in $\mathcal{O}(H)$. Consider the left module $\Hom _{\mathbb{C}} (\mathcal{O}(H)\ot Z,\mathbb{C})$ over the algebra $\Hom _{\mathbb{C}} (\mathcal{O}(H)\ot \mathcal{O}(H),\mathbb{C})$ (with respect to the convolution products $*$).
We have $J^{-1}*J_{21}=J^{-1}*J=\varepsilon\ot \varepsilon$ in $\Hom _{\mathbb{C}} (\mathcal{O}(H)\ot Z,\mathbb{C})$ (as $J=J_{21}$ on $\mathcal{O}(H)\ot Z$), where we view 
$J^{-1}$ as an element of $\Hom _{\mathbb{C}} (\mathcal{O}(H)\ot \mathcal{O}(H),\mathbb{C})$, and $J_{21},J$ as elements of $\Hom _{\mathbb{C}} (\mathcal{O}(H)\ot Z,\mathbb{C})$. Thus by the nondegeneracy of $R^J$, $\varphi =\varphi(e)1$ is a constant function, as claimed.

Now, since the restriction map $\mathcal{O}(G)_{J}\twoheadrightarrow \mathcal{O}(H)_{J}$ is an algebra epimorphism, it maps the center $Z$ of $\mathcal{O}(G)_{J}$ onto $\mathbb{C}$. Take $\varphi\in Z$, and $g\in G$. Since $Z$ is $G-$stable, $g^{-1}\varphi\in Z$ too, hence is constant on $H$. But $(g^{-1}\varphi)(h)=\varphi(gh)$ for every $h\in H$, and hence $\varphi$ is constant on every left coset $gH$. We thus get an algebra injection $Z\hookrightarrow \mathcal{O}(G/H)$.
In particular, we may identify $Z$ with a subalgebra of $\mathcal{O}(G/H)\subseteq \mathcal{O}(G)$.

Conversely, since $\mathcal{O}(G/H)$ is a left coideal subalgebra of $\mathcal{O}(G)$, it follows from (\ref{multj}) that $\mathcal{O}(G/H)$ is contained in $Z$. 

We have therefore proved that $Z=\mathcal{O}(G/H)$, as required.
\end{proof}

\begin{remark}\label{Q}
Set $Q:=J-J_{21}$. Note that in the proof of Theorem \ref{trivcen} we obtained that if either $\varphi$ or $\psi$ are central in $\mathcal{O}(G)_{J}$ then $Q(\varphi,\psi)=0$. If in addition either $\varphi(e)=0$ or $\psi(e)=0$ then $R^J(\varphi,\psi)=0$.
\end{remark}

\begin{example}
Here is an example where $J$ is not minimal but the center $\mathcal{O}(G/H)$ of $\mathcal{O}(G)_J$ is trivial (so the quotient variety $G/H$ is not affine). 

Let $G=\SL_2$, and $H=\mathbb{G}_m\ltimes \mathbb{G}_a$ be a Borel subgroup of $G$ (the group of affine transformations of the line). The Lie algebra $\h$ of $H$ is spanned by two elements $X:=x\frac{\partial}{\partial x},Y:=x\frac{\partial}{\partial y}$ such that $[X,Y]=Y$. It is straightforward to check that
$$
J(h):=\sum_{n\ge 0} \frac{h^n}
{n!} X(X-1)\cdots(X-n+1)\ot Y^n
$$
is a minimal Hopf $2-$cocycle for $\mathcal{O}(H)$ for every $h\in\mathbb{C}^*$ (see \cite[Example 5.2]{EG1}), and hence can be viewed as a non-minimal Hopf $2-$cocycle for $\mathcal{O}(G)$.
%
\end{example}

\section{Movshev's theory for affine nilpotent algebraic groups}\label{unip} 

In this section we study the structure of the infinite dimensional algebras $\mathcal{O}(G)_J$ for affine connected nilpotent algebraic groups $G$ over $\mathbb{C}$.

\subsection{Tori} Let $T$ be an algebraic torus over $\mathbb{C}$ of dimension $k$, and let $X(T)$ be its character group. Recall that $X(T)$ is a finitely generated torsion-free abelian group of rank $k$, and that $\mathcal{O}(T)$ is isomorphic to the group Hopf  algebra $\mathbb{C}[X(T)]=\mathbb{C}[x_1^{\pm 1},\dots,x_k^{\pm 1}]$ (= Laurent polynomial Hopf algebra).

\begin{theorem}\label{h2ctor}
Let $T$ be an algebraic torus over $\mathbb{C}$. Gauge equivalence classes of Hopf $2-$cocycles for $\mathcal{O}(T)$ are in bijection with elements of the group $H^2(X(T),\mathbb{C}^*)=\Hom(\wedge^2 X(T),\mathbb{C}^*)$ of alternating bicharacters on $X(T)$. Furthermore, minimal Hopf $2-$cocycles correspond under this bijection to nondegenerate alternating bicharacters .
\end{theorem}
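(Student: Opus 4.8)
The plan is to reduce everything to the ordinary group cohomology of the finitely generated free abelian group $A:=X(T)$, using that $\mathcal{O}(T)=\mathbb{C}[A]$ has every $a\in A$ group-like. First, since $\Delta(a)=a\otimes a$ the Hopf $2$-cocycle conditions (\ref{2coc}) for a linear map $J:\mathbb{C}[A]\otimes\mathbb{C}[A]\to\mathbb{C}$ say exactly that the restriction $\beta:=J|_{A\times A}$ is a normalized group $2$-cocycle $A\times A\to\mathbb{C}^*$ (convolution-invertibility of $J$ forces $\beta(a,b)\in\mathbb{C}^*$, and $\beta$ determines $J$ by bilinearity), and conversely; moreover a gauge transformation of $J$ by an invertible $\gamma:\mathbb{C}[A]\to\mathbb{C}$ with $\gamma(1)=1$ replaces $\beta$ by $\beta\cdot d\gamma$, where $\gamma$ is viewed as a function $A\to\mathbb{C}^*$ and $d\gamma(a,b)=\gamma(a)\gamma(b)\gamma(ab)^{-1}$. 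Hence gauge equivalence classes of Hopf $2$-cocycles for $\mathcal{O}(T)$ are in bijection with $H^2(A,\mathbb{C}^*)$.

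Next I would identify $H^2(A,\mathbb{C}^*)$ with $\Hom(\wedge^2 A,\mathbb{C}^*)$ via the commutator pairing $[\beta]\mapsto\chi_\beta$, where $\chi_\beta(a\wedge b):=\beta(a,b)\beta(b,a)^{-1}$. The cocycle identity makes $\chi_\beta$ a bicharacter (it is the commutator pairing of the central extension of $A$ by $\mathbb{C}^*$ classified by $\beta$), it is manifestly alternating, it is independent of the representative (coboundaries on the abelian group $A$ are symmetric), and it is multiplicative in $[\beta]$; so this is a group homomorphism $H^2(A,\mathbb{C}^*)\to\Hom(\wedge^2 A,\mathbb{C}^*)$. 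Because $A\cong\mathbb{Z}^k$ is free it is an isomorphism: for surjectivity, given $\chi$ pick a basis $e_1,\dots,e_k$ and take $\beta$ to be the unique biadditive map with $\beta(e_i,e_j)=\chi(e_i\wedge e_j)$ for $i<j$ and $\beta(e_i,e_j)=1$ otherwise, which is a $2$-cocycle (every biadditive map is) with $\chi_\beta=\chi$; for injectivity, a symmetric $2$-cocycle on $A$ represents an abelian extension $0\to\mathbb{C}^*\to E\to A\to0$, and all such split since $\Ext^1_{\mathbb{Z}}(A,\mathbb{C}^*)=0$, so it is a coboundary. (Alternatively one may invoke $H^2(\mathbb{Z}^k,\mathbb{C}^*)=\Hom(H_2(\mathbb{Z}^k;\mathbb{Z}),\mathbb{C}^*)=\Hom(\wedge^2\mathbb{Z}^k,\mathbb{C}^*)$, using divisibility of $\mathbb{C}^*$ to kill the $\Ext$ term in universal coefficients.) This identification is the one genuinely non-formal ingredient and the step I expect to require the most care, though it is classical.

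Finally, for the minimality assertion: since $\mathcal{O}(T)$ is commutative and cocommutative its cotriangular structure is the trivial one $R=\varepsilon\otimes\varepsilon$, which is the convolution unit, so $R^J=J_{21}^{-1}*J$ and, evaluating on group-likes, $R^J(a,b)=J(a,b)J(b,a)^{-1}=\chi_\beta(a\wedge b)$ with $\beta=J|_{A\times A}$; thus on the $\mathbb{C}$-basis $A$ of the coalgebra $\mathcal{O}(T)^J$ the bilinear form $R^J$ is the bicharacter $\chi_\beta$. I would then compute its radical: for $\xi=\sum_b c_b\,b$ one has $R^J(a,\xi)=\sum_b c_b\,\chi_\beta(a\wedge b)$, a linear combination of the characters $a\mapsto\chi_\beta(a\wedge b)$ of $A$; two such characters coincide precisely when the corresponding $b$'s differ by an element of $\mathrm{rad}(\chi_\beta):=\{x\in A:\chi_\beta(x\wedge y)=1\ \forall y\}$, and distinct characters of $A$ are linearly independent, so $\xi$ lies in the radical of $R^J$ iff the coefficients $c_b$ sum to zero on each coset of $\mathrm{rad}(\chi_\beta)$. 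Hence $R^J$ is nondegenerate iff $\mathrm{rad}(\chi_\beta)=0$, i.e. iff $\chi_\beta$ is a nondegenerate alternating bicharacter. As minimality of $J$ means exactly that $R^J$ is nondegenerate and is manifestly gauge-invariant, this is consistent with the bijection of the first two steps and completes the proof.
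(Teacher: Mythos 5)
Your argument is correct, and it is precisely the verification the paper has in mind: the paper's own proof of Theorem \ref{h2ctor} consists of the single line ``Follows from the definitions in a straightforward manner,'' and your three steps (grouplikes reduce Hopf $2$-cocycles and gauge transformations to group $2$-cocycles and coboundaries on $X(T)$; the alternation map $H^2(X(T),\mathbb{C}^*)\xrightarrow{\cong}\Hom(\wedge^2 X(T),\mathbb{C}^*)$ using freeness of $X(T)$ and divisibility of $\mathbb{C}^*$; the identification $R^J(a,b)=J(a,b)J(b,a)^{-1}$ together with linear independence of characters to match minimality with nondegeneracy) are exactly the details being suppressed. Your computation of $R^J$ on grouplikes also agrees with how the paper later uses the bijection in Theorem \ref{noethdomtori}, where $\lambda_{ij}=R^J(x_i,x_j)$ is treated as the alternating bicharacter, so nothing further is needed.
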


\begin{proof}
Follows from the definitions in a straightforward manner.
\end{proof}

\begin{remark}
The dimension of an algebraic torus having a minimal Hopf $2-$cocycle need not be even, as the following example shows. Let $T$ be a $3-$dimensional torus, with $X(T)$ having a basis $x_1,x_2,x_3$. Consider the alternating bicharacter on $X(T)$ given by $B(x_1,x_2)=a$, $B(x_3,x_1)=b$, and $B(x_2,x_3)=c$. If $a,b,c$ are generic (e.g., multiplicatively independent), the kernel of this form on $\mathbb{Z}^3$ is trivial, so the corresponding twist is minimal. 
(Morally $J$ corresponds to a $2-$dimensional Lie subgroup of $T$, which is not closed -- it is a ``winding" of the torus.)
\end{remark}

Given an $k\times k$ matrix $q=\{q_{ij}\}$ such that $q_{ii}=1$ and $q_{ji}=q_{ij}^{-1}$, let $E(q)$ denote the algebra generated by $x_1^{\pm 1},\dots,x_k^{\pm 1}$ subject to the relations $x_ix_j=q_{ij}x_jx_i$. It is well known that $E(q)$ is a simple algebra if and only if its center is $\mathbb{C}$ (see e.g., \cite[Proposition 1.3]{MP}).

\begin{theorem}\label{noethdomtori}
Let $T$ be an algebraic torus over $\mathbb{C}$ of dimension $k$. Let $J$ be a Hopf $2-$cocycle for $\mathcal{O}(T)$, let $S$ be its support, and let $l$ be the dimension of $S$. Fix a splitting of the inclusion $S^0\hookrightarrow T$, and use it to embed $\mathcal{O}(S^0)$ in $\mathcal{O}(T)$. Set $\lambda_{ij}:=R^J(x_i,x_j)$, $1\le i,j\le k$, let $\lambda=\{\lambda_{ij}\}$, and let $\lambda^0$ be the $l\times l$ matrix corresponding to the restriction of $R^J$ to $\mathcal{O}(S^0)$. The following hold:

1) $\mathcal{O}(T)_J\cong E(\lambda)$ is a Noetherian domain with center $\mathcal{O}(T/S)$.

2) $S$ is the Zariski-closure of the image of the map $$\tilde{\lambda}:X(T)\to \Hom(X(T),\mathbb{C}^*)=T,\,\,\,x\mapsto \lambda(x,?)$$ (viewing $\lambda$ as an alternating bicharacter on $X(T)$).

3) $\mathcal{O}(S)_J\cong \mathbb{C}^{R^J}[X(S)]\cong E(\lambda^0) \#_J \mathbb{C}[X(S/S^0)]$ (a crossed product with $2-$cocycle $J$) is a simple algebra. 

4) $(\mathcal{O}(T)_J)^{op}\cong _{J^{-1}} \mathcal{O}(T)$. 
\end{theorem}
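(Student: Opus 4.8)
\textbf{Proof proposal for Theorem \ref{noethdomtori}.}

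The plan is to establish all four claims by reducing to explicit computations with the Laurent polynomial Hopf algebra $\mathcal{O}(T)=\mathbb{C}[X(T)]$, using that gauge-equivalence classes of Hopf $2$-cocycles here are governed by alternating bicharacters (Theorem \ref{h2ctor}). The key observation is that since $\mathcal{O}(T)$ is cocommutative-free as an algebra but its comultiplication on each $x\in X(T)$ is group-like, $\Delta(x)=x\ot x$, the twisted product (\ref{multj}) becomes $m_J(x\ot y)=J(x,y)\,xy$; iterating, $m_J$ on monomials introduces only multiplicative scalars. Writing $\lambda(x,y):=J(x,y)J(y,x)^{-1}$ (which equals $R^J(x,y)$ up to the identification above, since $R^J=J_{21}^{-1}*R*J$ and $R=\varepsilon\ot\varepsilon$ for a commutative cocommutative Hopf algebra, so $R^J(x,y)=J(y,x)^{-1}J(x,y)$), one checks that in $\mathcal{O}(T)_J$ one has $x_i\cdot x_j = \lambda_{ij}\, x_j\cdot x_i$ where $\lambda_{ij}=\lambda(x_i,x_j)$. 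Hence there is a surjection $E(\lambda)\twoheadrightarrow \mathcal{O}(T)_J$; comparing graded dimensions (both are $X(T)$-graded with one-dimensional components, since $\mathcal{O}(T)_J\cong\mathcal{O}(T)$ as a vector space) shows it is an isomorphism. This gives the first half of (1): $\mathcal{O}(T)_J\cong E(\lambda)$. Noetherianity and the domain property then follow from standard facts about quantum tori $E(\lambda)$: it is an iterated Ore extension (localized) over $\mathbb{C}[x_1^{\pm1}]$, hence a Noetherian domain; the center statement $Z=\mathcal{O}(T/S)$ is immediate from Theorem \ref{trivcen}.

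Next I would prove (2), which pins down the support $S$ concretely. By Theorem \ref{trivcen}, $Z(\mathcal{O}(T)_J)=\mathcal{O}(T/S)$, so $S$ is determined as the unique closed subgroup with $\mathcal{O}(T)^S$ (the functions constant on $S$-cosets) equal to the center of $E(\lambda)$. The center of $E(\lambda)$ is spanned by those monomials $x^m$, $m\in X(T)$, that commute with every $x_i$, i.e.\ by $\{m\in X(T)\mid \lambda(m,x_i)=1\ \forall i\}=\ker\tilde\lambda$. Dually, $X(T/S)=\ker\tilde\lambda$ forces $S$ to be the subgroup of $T=\Hom(X(T),\mathbb{C}^*)$ cut out by the characters in $\ker\tilde\lambda$, i.e.\ $S=\{t\in T\mid m(t)=1\ \forall m\in\ker\tilde\lambda\}$; and this is exactly the Zariski closure of $\tilde\lambda(X(T))$, since the closure of a subset of the diagonalizable group $T$ is the subgroup annihilated by all characters vanishing on it. (One should also verify here that this closed subgroup $S$ indeed carries a minimal $2$-cocycle restricting $J$, so that it is the support in the sense of Theorem \ref{support}: the restriction of $\lambda$ to $X(S)$ is nondegenerate by construction, and Theorem \ref{h2ctor} identifies nondegenerate bicharacters with minimal cocycles.)

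For (3), restricting to $S$: the composite $\mathcal{O}(T)\twoheadrightarrow\mathcal{O}(S)$ is the Hopf surjection $\mathbb{C}[X(T)]\twoheadrightarrow\mathbb{C}[X(S)]$, and applying $m_J$ gives $\mathcal{O}(S)_J\cong\mathbb{C}^{R^J}[X(S)]$, the twisted group algebra of the finitely generated abelian group $X(S)$ with $2$-cocycle determined by (the image of) $\lambda$, now a nondegenerate bicharacter. Choosing a splitting $X(S)\cong X(S^0)\oplus X(S/S^0)$ with $X(S^0)=\mathbb{Z}^l$ torsion-free and $X(S/S^0)$ finite, the twisted group algebra factors as a crossed product $E(\lambda^0)\#_J\mathbb{C}[X(S/S^0)]$ as claimed, where $E(\lambda^0)$ is the quantum torus on the torsion-free part. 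Simplicity follows from nondegeneracy: a twisted group algebra $\mathbb{C}^\alpha[A]$ of a finitely generated abelian group $A$ with nondegenerate cocycle has trivial center (same center computation as above, now with no kernel), and for quantum tori / twisted group algebras of f.g.\ abelian groups, trivial center implies simple — this is the cited \cite[Proposition 1.3]{MP} for $E(q)$, extended across the finite crossed-product factor by a standard argument (a two-sided ideal meets the identity component of the grading in a $\mathbb{C}$-subspace, forcing it to be everything). Finally (4): the anti-isomorphism $\mathcal{O}(G)_J\cong{}_{J^{-1}}\mathcal{O}(G)^{op}$ holds for any affine algebraic group — it is the linear-dual statement of $(H^J)^{\text{cop}}\cong (H^{\text{cop}})^{J_{21}^{-1}}$, or more concretely the map $a\mapsto S(a)$ intertwines $m_J$ with the opposite of $_{J^{-1}}m$ after using the cotriangular structure — so for $T$ in particular it is automatic. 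The main obstacle I anticipate is item (3): verifying carefully that the twisted group algebra with nondegenerate cocycle is simple when $X(S/S^0)$ is nontrivial, since one must control two-sided ideals in a crossed product of a simple ring $E(\lambda^0)$ by a finite abelian group where the action and the cocycle interact; the graded-ideal argument works but requires that the nondegeneracy of $\lambda$ on all of $X(S)$ (not just on $X(S^0)$) be used to rule out ideals supported on proper cosets.
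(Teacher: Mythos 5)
Your proposal is correct and follows essentially the same route as the paper: identify $\mathcal{O}(T)_J$ with the quantum torus $E(\lambda)$ via the grouplike computation $x_i\cdot x_j=J(x_i,x_j)x_ix_j$, read off the center as $\mathbb{C}[\ker\tilde\lambda]$ and dualize to get part 2, decompose $\mathcal{O}(S)_J$ as the crossed product $E(\lambda^0)\#_J\mathbb{C}[X(S/S^0)]$, and deduce simplicity from nondegeneracy of $R^J$ on $X(S)$. The step you flag as the main obstacle is exactly the point the paper disposes of by citing the standard fact that a twisted group algebra of a finitely generated abelian group with nondegenerate $2$-cocycle is simple, and your graded-ideal argument is the usual proof of that fact.
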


\begin{proof}
1) By (\ref{multj}), $x_i\cdot x_j=J(x_i,x_j)x_ix_j$, hence $\mathcal{O}(T)_J$ is isomorphic to the twisted group algebra $\mathbb{C}^{\lambda}[X(T)]$ generated by $x_1^{\pm 1},\dots,x_k^{\pm 1}$ subject to the relations $x_i\cdot x_j=\lambda_{ij} (x_j\cdot x_i)$, i.e., to $E(\lambda)$. It is well known that $E(\lambda)$ is a Noetherian domain (see e.g., \cite{MR}). Finally, the center of $\mathcal{O}(T)_J$ is $\mathcal{O}(T/S)$ by Theorem \ref{trivcen}.

2) Let $\Gamma:=\ker(\tilde{\lambda})=\{x\in X(T)\mid \lambda(x,?)\equiv 1\}$. Clearly, the center of $\mathbb{C}^{\lambda}[X(T)]$ is the group algebra $\mathbb{C}[\Gamma]$, so $\mathcal{O}(T/S)=\mathbb{C}[\Gamma]$. Therefore, $\Gamma=X(T/S)$ (as $\Gamma$ is the character group of a quotient of $T$).

3) Fix a splitting of the inclusion $S^0\hookrightarrow S$, and use it to embed $X(S^0)$ in $X(S)$. Then $X(S)=X(S^0)\times X(S/S^0)$ as groups, and it is straightforward to verify that $\mathcal{O}(S)_J\cong E(\lambda^0)\ot \mathbb{C}^J[X(S/S^0)]$ as vector spaces, $E(\lambda^0)$ and $\mathbb{C}^J[X(S/S^0)]$ are subalgebras of $\mathcal{O}(S)_J$, the group $X(S/S^0)$ acts on $\mathcal{O}(S^0)_J$ via automorphisms, and hence that $\mathcal{O}(S)_J$ is a crossed product as described.

We claim that $\lambda$ is nondegenerate on $X(S^0)$. Indeed, if $x\in X(S^0)$ is such that $\lambda(x,?)\equiv 1$ on $X(S^0)$ then $\lambda(x^n,?)\equiv 1$ on $X(S)$, where $n$ is the order of $S/S^0$. But this implies that $x^n=1$, and hence $x=1$. Thus by 1), $\mathcal{O}(S^0)_J\cong E(\lambda^0)$ is a simple algebra.

Finally, since $\mathcal{O}(S)_J=\mathbb{C}^{R^J}[X(S)]$ is a twisted group algebra with a nondegenerate $2-$cocycle $R^J$, it is simple.



4) Follows from (\ref{multj}) and (\ref{multjinv}).
\end{proof}

\begin{example}\label{multgr2}
Let $T=\mathbb{G}_m\times \mathbb{G}_m$. Then $A=\mathcal{O}(T)=\mathbb{C}[x^{\pm 1},y^{\pm 1}]$, where $x$ and $y$ are grouplike elements. The Lie algebra of $T$ is abelian of dimension $2$, with basis $\delta:=x\frac{\partial}{\partial x}$ and $\mu:=y\frac{\partial}{\partial y}$. For every $h\in \mathbb{C}^*$ let $r_h:=\pi ih(\delta \wedge \mu)$, and set $q:=e^{2\pi ih}$. Let $\mathcal{O}(T)\ot \mathcal{O}(T)\xrightarrow{\eta_q}\mathcal{O}(T)\ot \mathcal{O}(T)$ denote the action by $e^{r_h}$ (it is well defined). Then as it is explained in \cite[Section 4]{EG1}, the composition
$$J_q:\mathcal{O}(T)\ot \mathcal{O}(T)\xrightarrow{\eta_q}\mathcal{O}(T)\ot \mathcal{O}(T)\xrightarrow{\varepsilon\ot \varepsilon}\mathbb{C}$$  
is a Hopf $2-$cocycle for $\mathcal{O}(T)$. Explicitly,
$J_q:\mathcal{O}(T)\ot \mathcal{O}(T)\to\mathbb{C}$ is given by  
\begin{eqnarray*}
\lefteqn{J_q(f,g)=e^{r_h}(f\ot g)_{|(1,1)}}\\
& = & f(1)g(1) + \pi ih\left(f_x(1)g_y(1)-f_y(1)g_x(1)\right)+\cdots 
\end{eqnarray*}
for every $f,g\in \mathbb{C}[x^{\pm 1},y^{\pm 1}]$. Hence, $J_q(x,y)=q^{1/2}$, $J_q(y,x)=q^{-1/2}$, and $x\cdot y=q(y\cdot x)$. So, $\lambda_{12}=q$. 

There are two cases: 

1) If $h\in \mathbb{Q}$, with denominator $d$, then $q$ is a root of $1$ of degree $d$, and so $J_q$ is supported on $S=\mathbb{Z}/d\mathbb{Z}\times \mathbb{Z}/d\mathbb{Z}$. We have $\mathcal{O}(S)_{J_q}\cong Mat_d(\mathbb{C})$, and the center of $\mathcal{O}(T)_{J_q}$ is the subalgebra generated by $x^{\pm d},y^{\pm d}$ (which is isomorphic to $\mathcal{O}(T/S)$).

2) If $h\notin \mathbb{Q}$, then $S=T$, i.e., $J_q$ is a minimal Hopf $2-$cocycle for $\mathcal{O}(T)$, and $\mathcal{O}(T)_{J_q}\cong E(\lambda)$ is a simple algebra. 
\end{example}

\subsection{Unipotent groups} Let $U$ be a unipotent algebraic group over $\mathbb{C}$ of dimension $m$. 
Recall that $A:=\mathcal{O}(U)$ is a finitely generated commutative 
irreducible pointed Hopf algebra, which is isomorphic to a
polynomial algebra as an algebra. Recall also that since $U$ is obtained from $m$ successive $1-$dimensional central extensions with kernel $\mathbb{G}_a$ (= additive group), $A$ 
admits a filtration 
\begin{equation}\label{filt}
\mathbb{C}=A_0\subset A_1\subset\cdots\subset A_i\subset\cdots\subset A_{m}=A
\end{equation}
by Hopf subalgebras $A_i$ such that for every $1\le i\le m$, $A_i=\mathbb{C}[z_1,\dots,z_i]$ is a polynomial algebra and
\begin{equation}\label{com}
\Delta(z_i)=z_i\ot 1 + 1\ot z_i + Z_i,
\end{equation}
where $Z_i\in A_{i-1}^+\ot A_{i-1}^+$, with $Z_1=Z_2=0$. We shall sometimes write $Z_i=\sum Z_i'\ot Z_i''$.

\begin{lemma}\label{lemmaunip}
Let $U$ be a unipotent algebraic group over $\mathbb{C}$ of dimension $m$, and  
let $J$ be a Hopf $2-$cocycle for $A:=\mathcal{O}(U)$. Let $\cdot$ denote the multiplication in $A_J$, and set $Q:=J-J_{21}$. The following hold:

1) The filtration on $A$ given in (\ref{filt}) determines an algebra filtration on the algebra $A_J$:
\begin{equation}\label{filt1}
\mathbb{C}=A_0\subset A_1\subset\cdots \subset(A_i)_J\subset\cdots\subset (A_{m})_J=A_J.
\end{equation}

2) For every $i$, the algebra $(A_i)_J$ is generated by $z_i$ over $(A_{i-1})_J$.

3) For every $j<i$, 
\begin{eqnarray*}
\lefteqn{z_i\cdot z_j - z_j\cdot z_i =
Q(z_i,z_j)1}\\ & + & \sum Z_i'Q(Z_i'',z_j) + \sum Z_j'Q(z_i,Z_j'') + \sum Z_i'Z_j'Q(Z_i'',Z_j'')
\end{eqnarray*}
is an element of $A_{i-1}$.

4) For every $i$, $(A_{i})_J=(A_{i-1})_J[z_i;\delta_i]$ is an Ore extension (see Section \ref{oreext}).

\end{lemma}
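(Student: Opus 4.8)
The plan is to establish the four assertions in sequence, since each one feeds into the next. The underlying mechanism throughout is the identity $fg = \sum f_1 \cdot g_1 J^{-1}(f_2,g_2)$ (equivalently $f\cdot g = \sum f_1 g_1 J(f_2,g_2)$) from (\ref{multj}), together with the fact that the Hopf subalgebras $A_i$ are $\Delta$-stable. First I would prove (1): since each $A_i$ is a Hopf subalgebra, the restriction of $J$ to $A_i \ot A_i$ is a Hopf $2$-cocycle for $A_i$, so $(A_i)_J$ is a well-defined subalgebra of $A_J$ (the twisted product of elements of $A_i$ lands in $A_i$ because $\Delta(A_i) \subseteq A_i \ot A_i$). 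That $(A_{i-1})_J \subseteq (A_i)_J$ is immediate, giving the filtration (\ref{filt1}). For (2), I would argue by induction on the total degree of a monomial in $z_1,\dots,z_i$ computed in the commutative algebra $A_i$: using $fg = \sum f_1\cdot g_1 J^{-1}(f_2,g_2)$ and (\ref{com}), the ordinary product $z_i \cdot (\text{lower monomial})$ differs from a twisted monomial by terms of strictly lower degree living in $(A_{i-1})_J$ plus terms involving at most the same number of $z_i$ factors but lower total degree; a standard filtration-induction then shows every element of $(A_i)_J$ is a polynomial in $z_i$ with coefficients in $(A_{i-1})_J$. Here one uses crucially that $Z_i \in A_{i-1}^+ \ot A_{i-1}^+$, so the ``correction'' components $Z_i', Z_i''$ all lie in $A_{i-1}$.

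For (3), I would compute $z_i \cdot z_j - z_j \cdot z_i$ directly from (\ref{multj}). Expanding $\Delta(z_i) = z_i\ot 1 + 1\ot z_i + \sum Z_i'\ot Z_i''$ and similarly for $z_j$, and using $\varepsilon(z_i)=\varepsilon(z_j)=0$ together with $J(a,1)=\varepsilon(a)=J(1,a)$, the expression $z_i\cdot z_j = \sum (z_i)_1 (z_j)_1 J((z_i)_2,(z_j)_2)$ breaks into the underlying commutative product $z_iz_j$ (which cancels when we subtract $z_j\cdot z_i$) plus the listed $J$-terms; subtracting and collecting yields exactly $Q(z_i,z_j)1 + \sum Z_i' Q(Z_i'',z_j) + \sum Z_j' Q(z_i,Z_j'') + \sum Z_i'Z_j'Q(Z_i'',Z_j'')$. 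That this lies in $A_{i-1}$ follows because $z_j \in A_{i-1}$ (as $j<i$), $Z_i', Z_i'' \in A_{i-1}$, and $Z_j', Z_j'' \in A_{j-1} \subseteq A_{i-1}$, so every summand is a product of elements of $A_{i-1}$ times a scalar.

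For (4), I must produce a derivation $\delta_i$ of $(A_{i-1})_J$ such that $(A_i)_J = (A_{i-1})_J[z_i;\delta_i]$. By (2), $(A_i)_J$ is generated over $(A_{i-1})_J$ by $z_i$, and by (3) (applied with $j$ replaced by an arbitrary generator $z_j$, $j<i$, and then extended multiplicatively) the commutator $[z_i, a] := z_i\cdot a - a\cdot z_i$ lies in $(A_{i-1})_J$ for every $a \in (A_{i-1})_J$; define $\delta_i(a) := [z_i,a]$. One checks $\delta_i$ is a derivation of $(A_{i-1})_J$: additivity is clear, and the Leibniz rule $\delta_i(ab) = \delta_i(a)b + a\delta_i(b)$ is the Jacobi-type identity $[z_i, ab] = [z_i,a]b + a[z_i,b]$, valid since $\delta_i(a),\delta_i(b) \in (A_{i-1})_J$ so there is no ambiguity about which product to use. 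It remains to see that $z_i$ is a free variable, i.e.\ $(A_i)_J$ is a \emph{free} left $(A_{i-1})_J$-module on $1, z_i, z_i^2, \dots$ with no algebraic relation; this follows because $A = A_i$ is, as a vector space, $A_{i-1}[z_i]$, the twisting does not change underlying vector spaces or the filtration degrees, and $(A_{i-1})_J$ has the same underlying space as $A_{i-1}$, so counting dimensions in each filtration degree forces freeness. The main obstacle I anticipate is the bookkeeping in (2) and (4): making the filtration-induction genuinely rigorous — precisely which filtration (total degree in the $z$'s versus the given Hopf-algebra filtration) one inducts on, and verifying that twisting preserves the associated graded well enough to conclude freeness of $z_i$ over $(A_{i-1})_J$ rather than merely that it generates. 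Everything else is a direct, if slightly lengthy, application of the cocycle axioms and (\ref{com}).
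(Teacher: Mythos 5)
Your proposal is correct and follows essentially the same route as the paper, which records only the key points: $(1)$ from the $A_i$ being Hopf subalgebras, $(2)$ from the generation argument of Proposition \ref{fg} combined with (\ref{com}), $(3)$ by direct expansion of (\ref{multj}) using (\ref{com}) and $Z_i\in A_{i-1}^+\ot A_{i-1}^+$, and $(4)$ by observing that $\delta_i=[z_i,-]$ is a derivation preserving $(A_{i-1})_J$. Your extra care about freeness of the powers of $z_i$ (via the unchanged underlying vector space and the triangularity of twisted versus ordinary powers) is a valid filling-in of a detail the paper leaves implicit.
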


\begin{proof}
1) Follows since each $A_i$ is a Hopf subalgebra of $A$.

2) Follows from Remark \ref{fgexp} and (\ref{com}).

3) Follows from (\ref{multj}) and (\ref{com}).

4) By Part 3), $$\delta_i:(A_{i-1})_J \to (A_{i-1})_J,\,\,r\mapsto z_i\cdot r - r\cdot z_i,$$ is a derivation of $(A_{i-1})_J$, so the claim follows from Part 2).
\end{proof}

\begin{proposition}\label{support1ext}
Let $V$ be a unipotent algebraic group over $\mathbb{C}$, and let $$1\to \mathbb{G}_a\xrightarrow{\iota} V\xrightarrow{\pi} \bar{V}\to 1$$ be a central extension. Let $J$ be a minimal Hopf $2-$cocycle for $\mathcal{O}(V)$, and let $\bar{J}:=J\circ(\pi^*\ot \pi^*)$ be the restriction of $J$ to $\mathcal{O}(\bar{V})$. There exists a closed subgroup $\iota(\mathbb{G}_a)\subseteq L\subset V$ of codimension $1$ (hence normal) such that $\bar{L}:=\pi(L)\subset \bar{V}$ is the support of $\bar{J}$.
In particular, $\bar{L}$ is normal in $\bar{V}$ and $\bar V/\bar{L}\cong \mathbb{G}_a$.
\end{proposition}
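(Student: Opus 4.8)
The plan is to transport the problem to the Lie algebra $\mathfrak v:=\Lie(V)$ and to read off the support of $\bar J$ from a classical $r$-matrix. Let $\bar\pi:\mathfrak v\twoheadrightarrow\bar{\mathfrak v}:=\Lie(\bar V)$ be the map induced by $\pi$, so its kernel $\mathbb{C}z:=\Lie(\iota(\mathbb{G}_a))$ is a $1$-dimensional central ideal. By the unipotent case of the classification of Hopf $2$-cocycles (\cite{EG1,EG2}, together with Drinfeld's theorem as recalled in Section \ref{drinres}), the minimal cocycle $J$ corresponds to a \emph{nondegenerate} $r\in\wedge^2\mathfrak v$ — equivalently, $\mathfrak v$ is quasi-Frobenius with symplectic form $\omega=r^{-1}$ — and, since restricting a cocycle along the Hopf inclusion $\pi^*:\mathcal{O}(\bar V)\hookrightarrow\mathcal{O}(V)$ amounts on infinitesimals to push-forward along $\bar\pi$, the restricted cocycle $\bar J$ corresponds to $\bar r:=(\bar\pi\ot\bar\pi)(r)\in\wedge^2\bar{\mathfrak v}$. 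Again by Drinfeld's theorem, the support $\bar L$ of $\bar J$ is the connected closed subgroup of $\bar V$ whose Lie algebra $\bar{\mathfrak h}$ is the image of $\bar r$ viewed as a map $\bar{\mathfrak v}^*\to\bar{\mathfrak v}$. So the whole statement comes down to computing the rank of this map.

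I would then carry out the (elementary, symplectic linear algebra) rank count. Since $r$ is nondegenerate and alternating it gives a skew linear isomorphism $\rho:\mathfrak v^*\xrightarrow{\sim}\mathfrak v$, and under the identification $\bar{\mathfrak v}^*=z^\perp=\{\xi\in\mathfrak v^*:\xi(z)=0\}$ the map attached to $\bar r$ is $\bar\rho:z^\perp\hookrightarrow\mathfrak v^*\xrightarrow{\rho}\mathfrak v\xrightarrow{\bar\pi}\bar{\mathfrak v}$. Hence $\Ker\bar\rho=\{\xi\in z^\perp:\rho(\xi)\in\mathbb{C}z\}=\rho^{-1}(\mathbb{C}z)$, which is a line; moreover, putting $\xi_0:=\rho^{-1}(z)$, skew-symmetry gives $\xi_0(z)=\la\xi_0,\rho(\xi_0)\ra=-\la\xi_0,\rho(\xi_0)\ra=0$, so $\rho^{-1}(\mathbb{C}z)=\mathbb{C}\xi_0\subseteq z^\perp$. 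Therefore $\dim\Ker\bar\rho=1$ and $\dim\bar{\mathfrak h}=\rank\bar\rho=\dim z^\perp-1=\dim\bar{\mathfrak v}-1$, i.e.\ $\bar L$ has codimension $1$ in $\bar V$. Because $\bar{\mathfrak v}$ is nilpotent, a codimension-$1$ subalgebra is automatically an ideal — otherwise $\bar{\mathfrak h}+[\bar{\mathfrak v},\bar{\mathfrak v}]=\bar{\mathfrak v}$, so $\bar{\mathfrak h}$ surjects onto $\bar{\mathfrak v}/[\bar{\mathfrak v},\bar{\mathfrak v}]$ and hence Lie-generates $\bar{\mathfrak v}$, forcing $\bar{\mathfrak h}=\bar{\mathfrak v}$ — so $\bar L$ is normal in $\bar V$ and $\bar V/\bar L$ is a $1$-dimensional unipotent group, hence $\cong\mathbb{G}_a$.

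Finally I would set $L:=\pi^{-1}(\bar L)$: this is a closed subgroup of $V$ containing $\Ker\pi=\iota(\mathbb{G}_a)$, with $\pi(L)=\bar L$ and $V/L\cong\bar V/\bar L\cong\mathbb{G}_a$; in particular $L$ has codimension $1$ in $V$, so it is proper and normal. This is the required subgroup.

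The main obstacle is the input used in the first paragraph: that over a unipotent group a minimal cocycle is governed by a nondegenerate bivector $r$, that restriction along $\pi^*$ replaces $r$ by $(\bar\pi\ot\bar\pi)(r)$, and that the Lie algebra of the support of the restricted cocycle is exactly the support of the resulting bivector. This is precisely the unipotent case of the classification of Hopf $2$-cocycles, and it is where the substantive content (imported from \cite{EG1,EG2}, or, if one wants an argument self-contained within this section, obtained from an infinitesimal analysis grafted onto Theorem \ref{support}) enters; everything past that point is the linear-algebra computation above. It is also worth recording two trivial compatibility remarks: $z\neq0$, so $\xi_0=\rho^{-1}(z)\neq0$ and $\rho^{-1}(\mathbb{C}z)$ really is a line; and $\bar J$ cannot itself be minimal, since $\dim\bar{\mathfrak v}=\dim\mathfrak v-1$ is odd while a quasi-Frobenius Lie algebra is even-dimensional, so $\bar L$ is a proper subgroup of $\bar V$, consistent with the codimension being exactly $1$.
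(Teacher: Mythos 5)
Your argument is mathematically sound as far as it goes, and the symplectic rank count in your second paragraph (that $\ker\bar\rho=\rho^{-1}(\mathbb{C}z)$ is a line which lies inside $z^\perp$ by skew-symmetry, so the image of $\bar r$ has codimension $1$ in $\bar{\mathfrak{v}}$) is correct and clean, as is the reduction $L:=\pi^{-1}(\bar L)$ at the end. But the route is genuinely different from the paper's. The paper never leaves the Hopf-algebraic setting: minimality of $J$ means the cotriangular form $R^J$ on $\mathcal{O}(V)^J$ is nondegenerate, so one takes the orthogonal complement $\mathcal{I}$ of the Hopf subalgebra $\pi^*(\mathcal{O}(\bar V))$ with respect to $R^J$; this is a Hopf ideal (as in Proposition \ref{minimal}), its untwist $\mathcal{I}^{J^{-1}}$ is the defining ideal of the closed subgroup $L$, and the resulting perfect pairing between $\mathcal{O}(V)^J/\mathcal{I}$ and $\mathcal{O}(\bar V)^{\bar J}$ yields in one stroke that $L$ has codimension $1$, contains $\iota(\mathbb{G}_a)$, and that $\bar L=\pi(L)$ is the support of $\bar J$. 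No Lie-theoretic classification is invoked.

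The substantive problem with your version is precisely the input you isolate in your last paragraph: you use the classification of Hopf $2$-cocycles for unipotent groups by solutions of the CYBE (\cite[Corollary 3.5]{EG2}, proved there via Etingof--Kazhdan quantization) both to replace the minimal $J$ by a nondegenerate $r\in\wedge^2\mathfrak{v}$ and to identify $\Lie(\bar L)$ with the image of $(\bar\pi\ot\bar\pi)(r)$. Inside this paper that is circular in spirit: Proposition \ref{support1ext} feeds into Theorem \ref{noethdomunip}, which feeds into Theorem \ref{qfunip} and Corollary \ref{revis}, whose stated purpose is to give an Etingof--Kazhdan-free re-proof of exactly that classification. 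So your proof is admissible only if \cite{EG2} is treated as an external black box, and even then the assertion that restriction along $\pi^*$ corresponds to push-forward of $r$ along $\bar\pi$, and that support is computed on the nose from the pushed-forward bivector, deserves a line of justification (functoriality of the universal quantization formula plus the fact that gauge transformations restrict along $\pi^*$). Your parenthetical fallback --- an ``infinitesimal analysis grafted onto Theorem \ref{support}'' --- is the right instinct but is not carried out; the orthogonal-complement argument above is the concrete, self-contained way to do it.
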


\begin{proof}
Consider the inclusion $\pi^*:(\mathcal{O}(\bar{V})^{\bar{J}},R^{\bar{J}})\hookrightarrow (\mathcal{O}(V)^J,R^J)$ of cotriangular Hopf algebras, and let $\mathcal{I}$ be the space of all $\alpha\in \mathcal{O}(V)$ such that $R^J(\alpha,\pi^*(\beta))=0$ for every $\beta\in \mathcal{O}(\bar{V})$ (the ``complement" of $\mathcal{O}(\bar{V})$ with respect to the nondegenerate bilinear form $R^J$ on $\mathcal{O}(V)$). Since $\mathcal{I}$ is a Hopf ideal of $\mathcal{O}(V)^J$, $\mathcal{I}^{J^{-1}}$ is a Hopf ideal of $\mathcal{O}(V)$, so it is the defining ideal $\mathcal{I}(L)\subset \mathcal{O}(V)$ of 
some closed subgroup $L$ of $V$. It is now clear from the construction that $R^J$ defines a perfect Hopf pairing between 
$\mathcal{O}(V)^J/\mathcal{I}$ and $\mathcal{O}(\bar{V})^{\bar{J}}$, so $L$ has codimension $1$ in $V$, $\iota(\mathbb{G}_a)\subseteq L$, and $\bar{L}=\pi(L)\subset \bar V$ is the support of $\bar{J}$. 
\end{proof}

For every integer $n\ge 0$ let $W(n)$ be the Weyl algebra of Gelfand-Kirillov dimension $2n$ (so $W(0)=\mathbb{C}$).

\begin{theorem}\label{noethdomunip}
Let $U$ be a unipotent algebraic group over $\mathbb{C}$. Let $J$ be a Hopf $2-$cocycle for $\mathcal{O}(U)$, and let $V$ be its support. The following hold:

1) $\mathcal{O}(U)_J\cong W(\dim(V))\ot \mathcal{O}(U/V)$, hence a Noetherian domain. 

2) $\mathcal{O}(V)_J\cong W(\dim(V))$ is a simple algebra.

3) $(\mathcal{O}(U)_J)^{op}\cong _{J^{-1}}\mathcal{O}(U)$.
\end{theorem}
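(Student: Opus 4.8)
The plan is to build $\mathcal{O}(U)_J$ by induction on $\dim U$, using the central extension structure (\ref{filt})--(\ref{com}) of $A=\mathcal{O}(U)$ together with the Ore-extension description in Lemma \ref{lemmaunip}(4). At each step we pass from $\bar V=U/\mathbb{G}_a$ (for the top central $\mathbb{G}_a$) to $U$, tracking how the support $V$ and the algebra $\mathcal{O}(U)_J$ change; Proposition \ref{support1ext} is exactly the tool that controls how the support behaves under such an extension. The desired normal form $W(\dim V)\otimes \mathcal{O}(U/V)$ will emerge because a Weyl algebra is an iterated Ore extension $W(n)=W(n-1)[p;\partial/\partial q]$, and adding a non-supported central $\mathbb{G}_a$-direction contributes a central polynomial generator, i.e.\ tensoring with one more variable of $\mathcal{O}(U/V)$.

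\textbf{Key steps.} First I would reduce (3) to a formal consequence of the multiplication rules: from (\ref{multj}) and (\ref{multjinv}), together with $J^{-1}_{21}$ being the convolution inverse on the opposite side, one checks directly that the identity map is an algebra isomorphism $(\mathcal{O}(U)_J)^{\mathrm{op}}\xrightarrow{\cong}{}_{J^{-1}}\mathcal{O}(U)$ after applying the antipode; this is the same computation as in Theorem \ref{noethdomtori}(4) and does not use unipotence. Second I would prove (1) and (2) together by induction on $m=\dim U$, the base case $m=0$ being trivial. For the inductive step, write $U$ as a central extension $1\to\mathbb{G}_a\to U\to \bar U\to 1$ coming from the last stage of (\ref{filt}), let $\bar J$ be the restriction of $J$, and apply the inductive hypothesis to get $\mathcal{O}(\bar U)_{\bar J}\cong W(\dim\bar V)\otimes\mathcal{O}(\bar U/\bar V)$ where $\bar V$ is the support of $\bar J$. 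By Lemma \ref{lemmaunip}(4), $\mathcal{O}(U)_J$ is an Ore extension $\mathcal{O}(\bar U)_{J}[z_m;\delta_m]$ of $\mathcal{O}(\bar U)_{\bar J}$ by the derivation $\delta_m$, whose effect on generators is given explicitly by Lemma \ref{lemmaunip}(3). There are two cases according to whether the support $V$ of $J$ maps onto $\bar U$ (equivalently $\dim V=\dim\bar V+1$) or $V$ projects isomorphically to $\bar V$ (so $\dim V=\dim\bar V$); Proposition \ref{support1ext} and Theorem \ref{trivcen} (which identifies the center with $\mathcal{O}(U/V)$) are what distinguish these cases. In the first case the new commutator $[z_m,-]=\delta_m$ pairs nondegenerately against one of the existing Weyl generators (after a change of variables, by the argument of Proposition \ref{support1ext} that $R^J$ is perfect modulo $\mathcal{I}$), producing the extra symplectic pair, so $\mathcal{O}(V)_J\cong W(\dim\bar V)[p,q]/([p,q]=1)=W(\dim\bar V+1)$ and the tensor-with-$\mathcal{O}(U/V)$ structure is inherited; in the second case $z_m$ becomes central modulo lower terms, contributing a central polynomial variable and giving $\mathcal{O}(U)_J\cong\mathcal{O}(\bar U)_{\bar J}\otimes\mathbb{C}[z]=W(\dim V)\otimes\mathcal{O}(U/V)$. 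Third, once the isomorphism $\mathcal{O}(U)_J\cong W(\dim V)\otimes\mathcal{O}(U/V)$ is established, Noetherianity and the domain property are automatic since Weyl algebras and polynomial rings are Noetherian domains and the tensor product of a Noetherian domain with a (commutative affine) domain over $\mathbb{C}$ is again one; and (2) follows by taking $U=V$, in which case $\mathcal{O}(U/V)=\mathbb{C}$ and $W(\dim V)$ is simple.

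\textbf{Main obstacle.} The delicate point is the first case of the induction: showing that the Ore derivation $\delta_m$, after a suitable $\mathbb{C}$-linear change of the generating set, genuinely realizes $\mathcal{O}(V)_J$ as $W(\dim\bar V+1)$ rather than merely as some filtered deformation with the right associated graded. One must use that $R^J$ is nondegenerate on $\mathcal{O}(V)$ and — via Proposition \ref{support1ext} — induces a perfect pairing between $\mathcal{O}(V)_J$ modulo the radical ideal $\mathcal{I}$ and $\mathcal{O}(\bar V)_{\bar J}$, to locate a generator $w$ in the image of $\mathcal{O}(\bar V)$ with $\delta_m(w)=1+(\text{lower order})$, and then invoke the standard fact that an Ore extension $W(n-1)[z;\delta]$ with $[z,w]=1$ for a Heisenberg-type $w$ is isomorphic to $W(n)$ (one clears the lower-order correction by a triangular automorphism, as the filtration (\ref{filt1}) is by subalgebras and the correction lies in $(A_{i-1})_J$). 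Handling the non-minimal $Z_i$-terms in Lemma \ref{lemmaunip}(3) and checking that the triangular automorphism exists without disturbing the already-constructed Weyl generators or the central factor $\mathcal{O}(U/V)$ is where the real work lies; everything else is bookkeeping with the filtration and with Theorem \ref{trivcen}.
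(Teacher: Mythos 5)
Your overall strategy -- induction on $\dim U$ through the central extension $1\to\mathbb{G}_a\to U\to\bar U\to 1$, the Ore-extension description from Lemma \ref{lemmaunip}(4), a two-case analysis governed by Proposition \ref{support1ext} and Theorem \ref{trivcen}, and the formal argument for part (3) -- is exactly the paper's. But there are two genuine gaps in the inductive step, both at the point you yourself flag as the ``main obstacle.''

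First, you never decouple $z_m$ from the Weyl factor. The paper's key move is to observe that $\delta_m$ is a derivation of $W(\dim\bar{L})\ot\mathcal{O}(\bar U/\bar{L})$, that $\Der(W\ot C)=(\Der(W)\ot C)\oplus\Der(C)$ since the Weyl algebra has trivial center, and that every derivation of a Weyl algebra is inner; hence $\delta_m=\sum_j\ad w_j\ot y_j+\mu$, and replacing $z_m$ by $t_m:=z_m-\sum_j w_j\cdot y_j$ produces a generator commuting with the \emph{entire} Weyl factor. Only after this reduction does $\mathcal{O}(U)_J\cong W(\dim\bar L)\ot\bigl(\mathcal{O}(\bar U/\bar L)[z_m;\mu]\bigr)$ hold, which is what makes the rest of the argument a statement about the commutative factor alone. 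Your proposal instead tries to run the Ore extension directly against the Weyl generators, and your description of the case where the support grows -- that ``the new commutator pairs nondegenerately against one of the existing Weyl generators, producing the extra symplectic pair'' -- is the opposite of what happens: any pairing of $\delta_m$ with the existing Weyl generators is an inner derivation and gets absorbed into $t_m$; it cannot create a new symplectic pair. The new pair is $(z_m,y)$ where $y$ lies in the \emph{center} $\mathcal{O}(\bar U/\bar L)$ of $\mathcal{O}(\bar U)_J$. Its existence is exactly what Proposition \ref{support1ext} provides: when $\mathbb{G}_a\subseteq V$, the support of the restricted cocycle drops by one \emph{inside} $\bar V=\pi(V)$, so $\mathcal{O}(\bar V/\bar L)\cong\mathcal{O}(\mathbb{G}_a)$ and a formerly central direction becomes available to pair with $z_m$. (Relatedly, your dimension bookkeeping is off: the support dimension jumps by $0$ or by $2$ between consecutive levels, never by $1$ -- consistent with supports being even-dimensional -- whereas you assert $\dim V=\dim\bar V+1$.)

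Second, even granting the existence of $y$, you do not address why $z_m\cdot y-y\cdot z_m$ is a nonzero \emph{scalar} rather than merely a nonzero element of lower filtration degree; without this, $\mathbb{C}[y][z_m;\mu]$ is only a filtered deformation of $W(1)$, as you note, and your appeal to ``a triangular automorphism'' is not enough to finish. The paper handles this by choosing $y$ of minimal coradical degree among elements of $\mathcal{O}(\bar U/\bar L)$ mapping to a nonzero primitive of $\mathcal{O}(\bar V/\bar L)$, writing $Y=\Delta(y)-y\ot1-1\ot y=\sum Y'\ot Y''$, and killing every correction term in Lemma \ref{lemmaunip}(3): $Q(Z_m'',y)=Q(Z_m'',Y'')=0$ because $y,Y''$ are central in $\mathcal{O}(\bar U)_J$ (Remark \ref{Q}), and $Q(z_m,Y'')=0$ because minimality of $\deg y$ forces each $Y''$ to map to $0$ in $\mathcal{O}(\bar V/\bar L)$. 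This computation is the real content of the hard case and is absent from your outline.
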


\begin{proof}
It is sufficient to prove the first part of the theorem. The proof is by induction on the dimension $m$ of $U$, the cases $m=0,1$ being trivial. Let $1\to \mathbb{G}_a\xrightarrow{\iota}U\xrightarrow{\pi}\bar U\to 1$ be the central extension corresponding to the inclusion of Hopf algebras $A_{m-1}=\mathcal{O}(\bar U)\subset A_m=A$ (so $\bar U$ has dimension $m-1$), and let $\bar{L}\subseteq \bar U$ be the support of the restriction of $J$ to $\mathcal{O}(\bar U)$. By the induction assumption, 
$$\mathcal{O}(\bar U)_J\cong W(\dim(\bar{L}))\ot \mathcal{O}(\bar U/\bar{L})$$ as algebras, and by Lemma \ref{lemmaunip},
$\mathcal{O}(U)_J=\mathcal{O}(\bar U)_J[z_m;\delta_m]$. 

By a well known fact,
$$\Der(\mathcal{O}(\bar U)_J)=\left(\Der(W(\dim(\bar{L})))\ot \mathcal{O}(\bar U/\bar{L})\right)\bigoplus 
\Der(\mathcal{O}(\bar U/\bar{L}))$$
(as the center of a Weyl algebra is trivial).
Therefore, since every derivation of a Weyl algebra is inner, we have that 
$$\delta_m=\sum_j \ad w_j\ot y_j + \mu$$ for some $w_j\in W(\dim(\bar{L}))$, $y_j\in \mathcal{O}(\bar U/\bar{L})$, and $\mu\in \Der(\mathcal{O}(\bar U/\bar{L}))$. Explicitly, $$\delta_m(w\cdot z)=\sum_j (w_j\cdot w - w\cdot w_j)\ot y_j\cdot z + w\cdot \mu(z)$$
for every $w\in W(\dim(\bar{L}))$ and $z\in \mathcal{O}(\bar U/\bar{L})$.
But this implies that $$z_m\cdot (w\cdot z) - (w\cdot z)\cdot z_m=s_m\cdot (w\cdot z)- (w\cdot z)\cdot s_m+ w\cdot \mu(z),$$ where $s_m:=\sum_j w_j\cdot y_j\in \mathcal{O}(\bar U)_J$, and hence that 
$$
t_m\cdot (w\cdot z) - (w\cdot z)\cdot t_m=w\cdot \mu(z),
$$
where $t_m:=z_m - s_m$. For $z=1$, $t_m\cdot w - w\cdot t_m=0$, so we conclude that
\begin{equation}\label{AJ}
\mathcal{O}(U)_J\cong \mathcal{O}(\bar U)_J[z_m;\delta_m]\cong W(\dim(\bar{L}))\ot \mathcal{O}(\bar U/\bar{L})[z_m;\mu]. 
\end{equation}

Now set $\bar{V}:=\pi(V)$. There are two cases:

{\bf Case 1:}
If $\pi$ maps $V$ isomorphically onto $\bar{V}$ then $\bar{L}=\bar{V}\cong V$, so $\mathcal{O}(\bar U/\bar{L})$ is properly contained in $\mathcal{O}(U/V)$, which means by Theorem \ref{trivcen} that the center of $\mathcal{O}(\bar U)_J$ is properly contained in the center of $\mathcal{O}(U)_J$. We thus conclude that $z_m$ commutes with $\mathcal{O}(\bar U/\bar{L})$, so $$\mathcal{O}(\bar U/\bar{L})[z_m;\mu]\cong \mathcal{O}(U/V),$$ which implies by (\ref{AJ}) that $$\mathcal{O}(U)_J\cong W(\dim(\bar{L}))\ot \mathcal{O}(\bar U/\bar{L})[z_m;\mu]\cong W(\dim(V))\ot \mathcal{O}(U/V),$$ as desired.

{\bf Case 2:}
Suppose $\pi$ maps $V$ non-isomorphically onto $\bar{V}$. Then 
by Proposition \ref{support1ext}, $\mathcal{O}(\bar U/\bar V)\cong\mathcal{O}(U/V)$ is a proper subalgebra of $\mathcal{O}(\bar U/\bar{L})\cong \mathcal{O}(U/L)$, so by Theorem \ref{trivcen}, the center of $\mathcal{O}(U)_J$ is properly contained in the center of $\mathcal{O}(\bar U)_J$. Furthermore, $\mathcal{O}(\bar U/\bar{L})\cong \mathcal{O}(U/L)$ is mapped onto $\mathcal{O}(\bar V/\bar{L})\cong \mathcal{O}(V/L)$ under the {\em Hopf algebra surjection} $\mathcal{O}(U) \twoheadrightarrow \mathcal{O}(V)$.

By Proposition \ref{support1ext}, $\mathcal{O}(\bar V/\bar{L}) \cong \mathcal{O}(\mathbb{G}_a)$. Let $y\in \mathcal{O}(\bar U/\bar{L})$ be an element which is mapped to a nonzero primitive element in $\mathcal{O}(\bar V/\bar{L})$, and assume it has the minimal possible degree (with respect to the coradical filtration on $\mathcal{O}(U)$). Set $$Y:=\Delta(y)-y\ot 1-1\ot y,$$ and write $Y=\sum Y'\ot Y''$, where $\{Y'\}$ and $\{Y''\}$ are linearly independent sets. Note that $y$, and every $Y',Y''$ vanish at the point 
$e\in U$, i.e., they belong to $\mathcal{O}(U)^+$. Also, since $\mathcal{O}(\bar U/\bar{L})$ is a left coideal subalgebra in $\mathcal{O}(U)$, every $Y''$ belongs to $\mathcal{O}(\bar U/\bar{L})$.

Now, we have that
$$\mathcal{O}(\bar U/\bar{L})[z_m;\mu]\cong \left(\mathcal{O}(\bar U/\bar V)\ot \mathbb{C}[y]\right)[z_m;\mu],$$
where $z_m$ commutes with the elements of $\mathcal{O}(\bar U/\bar V)$ (but not with $y$), and by Lemma \ref{lemmaunip},
\begin{eqnarray*}
\lefteqn{z_m\cdot y - y\cdot z_m =
Q(z_m,y)1}\\ & + & \sum Z_m'Q(Z_m'',y) + \sum Y'Q(z_m,Y'') + \sum Z_m'Y'Q(Z_m'',Y'').
\end{eqnarray*} 
Since $y,Y''$ are central in $\mathcal{O}(\bar U)_J$, and $Z_m''$ is in $\mathcal{O}(\bar U)_J$, it follows that $Q(Z_m'',y)=0=Q(Z_m'',Y'')$ (see Remark \ref{Q}).
Moreover, since every $Y''$ is mapped to $\mathcal{O}(\bar V/\bar{L})^+ \cong \mathcal{O}(\mathbb{G}_a)^+$ under $\mathcal{O}(U) \twoheadrightarrow \mathcal{O}(V)$, it follows from the minimality of $y$ that every $Y''$ must be mapped to $0$. So $Q(z_m,Y'')=0$, too. We thus conclude that
$$z_m\cdot y-y\cdot z_m=Q(z_m,y)1\ne 0$$ (as $z_m$ and $y$ do not commute), and hence that 
$$\mathcal{O}(\bar U/\bar{L})[z_m;\mu]\cong \mathcal{O}(\bar U/\bar V)\ot \mathbb{C}[y][z_m;\mu]\cong \mathcal{O}(U/V)\ot W(1),$$ which implies by (\ref{AJ}) that $$\mathcal{O}(U)_J\cong W(\dim(\bar{L}))\ot \mathcal{O}(\bar U/\bar{L})[z_m;\mu]\cong W(\dim(V))\ot \mathcal{O}(U/V),$$ as desired.

This completes the proof of the theorem.
\end{proof}

Let us give some examples which illustrate the steps taken in the proof of Theorem \ref{noethdomunip}.

\begin{example}\label{addgr2} 
Let $U=\mathbb{G}_a\times \mathbb{G}_a$. Then $A=\mathcal{O}(U)=\mathbb{C}[x,y]$ is a polynomial Hopf algebra, i.e., both $x$ and $y$ are primitive elements. So in this case we have $$A_{0}=\mathbb{C}\subset A_1=\mathbb{C}[y]\subset A_2=A.$$

Let $\uu$ be the Lie algebra of $U$; it is abelian of dimension $2$, with basis $\{\partial /\partial x,\partial /\partial y\}$. Let $r:=\partial /\partial x \ot \partial /\partial y-\partial /\partial y \ot \partial /\partial x$, and let $\mathcal{O}(U)\ot \mathcal{O}(U)\xrightarrow{\eta}\mathcal{O}(U)\ot \mathcal{O}(U)$ denote the action by $e^{r/2}$ (it is well defined). Then as it is explained in \cite[Section 4]{EG1}, the composition
$$J:\mathcal{O}(U)\ot \mathcal{O}(U)\xrightarrow{\eta}\mathcal{O}(U)\ot \mathcal{O}(U)\xrightarrow{\varepsilon\ot \varepsilon}\mathbb{C}$$  
is a (minimal) Hopf $2-$cocycle for $\mathcal{O}(U)$. Explicitly, for every two polynomials $f,g\in \mathbb{C}[x,y]$
\begin{eqnarray*}
\lefteqn{J(f,g)=e^{\frac{1}{2}\left(\frac{\partial}{\partial x} \ot \frac{\partial}{\partial y}-\frac{\partial}{\partial y} \ot \frac{\partial}{\partial x}\right)}(f\ot g)_{|(0,0)}}\\
& = & f(0)g(0)+\frac{f_x(0)g_y(0)-f_y(0)g_x(0)}{2}+\cdots
\end{eqnarray*}
(since $\varepsilon$ is evaluation at the point $0\in \mathbb{C}^2$). In particular, it is straightforward to verify that $J(x,y)=1/2$ and $J(y,x)=-1/2$, and hence that $Q(x,y)=1$. Finally by Lemma \ref{lemmaunip}, $$(A_{0})_J=\mathbb{C}\subset (A_1)_J=\mathbb{C}[y]\subset (A_2)_J=A_J\cong \mathbb{C}[y][x;\delta],$$
and $x\cdot y-y\cdot x=Q(x,y)1=1$. We therefore conclude that $A_J\cong W(1)$ is the Weyl algebra.

More generally, let
$U=\mathbb{G}_a^{2n}$, $\uu=\mathbb{C}^{2n}$ with basis $p_1,\dots,p_n, q_1,\dots,q_n$, 
$r:=\sum_i p_i\wedge q_i\in \wedge ^2 \uu$, and $J:=e^{r/2}$. Then $\mathcal{O}(U)_J$ is the Weyl algebra $W(n)$ ({\em the
Moyal-Weyl quantization}).
\end{example}

\begin{example}\label{heisen} 
Let $U$ be the $3-$dimensional Heisenberg group. Then $A=\mathcal{O}(U)=\mathbb{C}[x,y,z]$ is a polynomial algebra, where both $x$ and $y$ are primitive elements, and $\Delta(z)=z\ot 1+1\ot z+x\ot y$. So in this case we have $$A_{0}=\mathbb{C}\subset A_1=\mathbb{C}[y]\subset A_2=\mathbb{C}[y,x]\subset A_3=A.$$

Consider the surjective homomorphism of Hopf algebras $$\mathcal{O}(U)\xrightarrow{\pi} \mathbb{C}[Z,Y]=\mathcal{O}(\mathbb{G}_a\times \mathbb{G}_a),\,\,
x\mapsto 0,\,y\mapsto Y,\,z\mapsto Z,$$ and let $J$ be the minimal Hopf $2-$cocycle for $\mathbb{C}[Z,Y]$ from Example \ref{addgr2}. Viewing $J$ as (a non-minimal) Hopf $2-$cocycle for $A$ via $\pi$, we see that its support $V\cong \mathbb{G}_a\times \mathbb{G}_a$ is the closed subgroup of $U$ corresponding to $\pi$. Also, $J(z,y)=J(Z,Y)=1/2$, $J(y,z)=J(Y,Z)=-1/2$, $J(x,y)=J(0,Y)=\varepsilon(Y)=0$, and $J(x,z)=J(0,Z)=\varepsilon(Z)=0$. So $Q(z,y)=1$ and $Q(y,x)=Q(z,x)=0$. 

Now by Lemma \ref{lemmaunip}, $$\mathbb{C}\subset (A_1)_{J}=\mathbb{C}[y]\subset (A_2)_{J}=\mathbb{C}[y][x;\delta_1]\subset A_{J}\cong \mathbb{C}[y][x;\delta_1][z;\delta_2],$$
$x\cdot y-y\cdot x=Q(x,y)1=0$, $z\cdot y-y\cdot z=Q(z,y)1+xQ(y,y)=1$, and $z\cdot x-x\cdot z=Q(z,x)1+xQ(y,x)=0$. We therefore conclude that $(A_2)_{J}=\mathbb{C}[y,x]$ and 
$$A_{J}\cong \mathcal{O}(V)_J\ot \mathcal{O}(U/V)\cong W(1)\ot \mathbb{C}[x].$$
Note also that the restriction of $J$ to $A_2$ is trivial, so the dimension of its support is $\dim(V)-2$ ($=0$).
\end{example}

\begin{example}
Let $U$ be a unipotent algebraic group over $\mathbb{C}$, and let $\uu$ be its Lie algebra. Let $J(r)$ be the Hopf 2-cocycle for $\mathcal{O}(U)$ corresponding to the solution $r\in \wedge^2\uu$ of the CYBE \cite{EG2} (see also Section 5 below), let $V$ be its support, and let $\mathfrak{v}$ be the Lie algebra of $V$. We have that $\mathcal{O}(U)_{J(r)}\cong W(\mathfrak{v})\ot \mathcal{O}(U/V)$. In particular, the center of $\mathcal{O}(U)_{J(r)}$ is equal to $\mathcal{O}(U/V)$, and if $r\in \wedge^2\uu$ is nondegenerate then the algebra $\mathcal{O}(U)_{J(r)}$ is isomorphic to a Weyl algebra. Indeed, this follows from Theorem \ref{noethdomunip}.
\end{example}

\subsection{The general case} Let $G=T\times U$ be a connected nilpotent algebraic group over $\mathbb{C}$, where $T$ is a torus and $U$ is a unipotent group. Recall that $\mathcal{O}(G)=\mathcal{O}(T)\ot \mathcal{O}(U)$ as Hopf algebras. Let $\mathcal{O}(U)=\mathbb{C}[z_1,\dots,z_m]$, and $\mathcal{O}(T)=\mathbb{C}[X(T)]=\mathbb{C}[x_1^{\pm 1},\dots,x_k^{\pm 1}]$. We shall assume that $1=\deg (z_1)=\deg (z_2)\le \deg (z_3)\le \cdots \le \deg (z_m)$ (with respect to the coradical filtration on $\mathcal{O}(U)$). For every $1\le j\le m$, write $\Delta(z_j)=z_j\ot 1 +1\ot z_j +\sum Z_j'\ot Z_j''$.

\begin{lemma}\label{lemmanilp}
Let $G=T\times U$ be a connected nilpotent algebraic group over $\mathbb{C}$, and let $J$ be a Hopf $2-$cocycle for $\mathcal{O}(G)$. The following hold:

1) $\mathcal{O}(T)_J$ and $\mathcal{O}(U)_J$ are subalgebras of $\mathcal{O}(G)_J$.

2) For every $1\le i\le k$ and $1\le j\le m$, 
\begin{equation}\label{gp}
x_i\cdot z_j\cdot x_i^{-1} = z_j + p_{ij}, 
\end{equation}
where $p_{ij}:=Q(x_i,z_j)1+ \sum Z_j'Q(x_i,Z_j'')\in \mathcal{O}(U)$ has smaller degree than $\deg (z_j)$.
\end{lemma}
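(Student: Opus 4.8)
Part (1) is immediate: $\mathcal{O}(T)$ and $\mathcal{O}(U)$ are Hopf subalgebras of $\mathcal{O}(G)=\mathcal{O}(T)\ot\mathcal{O}(U)$, and the twisted product (\ref{multj}) of two elements only involves their comultiplications, which stay inside the subalgebra, and the values of $J$; hence each of $\mathcal{O}(T)$, $\mathcal{O}(U)$ is closed under $\cdot$, so $\mathcal{O}(T)_J$ and $\mathcal{O}(U)_J$ are subalgebras of $\mathcal{O}(G)_J$ (and are the twisted algebras of the restricted cocycles).

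For (2) I would first note that each $x_i$ is a unit of $\mathcal{O}(G)_J$: since $x_i$ is grouplike, (\ref{multj}) gives $x_i\cdot x_i^{-1}=x_ix_i^{-1}J(x_i,x_i^{-1})=J(x_i,x_i^{-1})\,1$ and likewise $x_i^{-1}\cdot x_i=J(x_i^{-1},x_i)\,1$, with $J(x_i,x_i^{-1})\neq0$ because $J$ is invertible under $*$; hence conjugation by $x_i$ is a well-defined algebra automorphism of $\mathcal{O}(G)_J$. Applying (\ref{multj}) with $\Delta(x_i)=x_i\ot x_i$ and (\ref{com}), and using that $\mathcal{O}(G)$ is commutative, a direct expansion yields (the products on the right being taken in $\mathcal{O}(G)$)
\[
x_i\cdot z_j=x_iz_j+J(x_i,z_j)\,x_i+\textstyle\sum Z_j'J(x_i,Z_j'')\,x_i,\qquad
z_j\cdot x_i=x_iz_j+J(z_j,x_i)\,x_i+\textstyle\sum Z_j'J(Z_j'',x_i)\,x_i .
\]
Subtracting, $x_i\cdot z_j-z_j\cdot x_i=p_{ij}\,x_i$ with $p_{ij}=Q(x_i,z_j)1+\sum Z_j'Q(x_i,Z_j'')$. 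Since $\mathcal{O}(U)$ is a Hopf subalgebra, $Z_j',Z_j''\in\mathcal{O}(U)^+$, so $p_{ij}\in\mathcal{O}(U)$; and since the coradical filtration on $\mathcal{O}(U)$ is a Hopf-algebra filtration, $Z_j=\Delta(z_j)-z_j\ot1-1\ot z_j$ lies in $\sum_{p+q\le\deg z_j,\ p,q\ge1}\mathcal{O}(U)_p\ot\mathcal{O}(U)_q$, whence $\deg Z_j'\le\deg z_j-1$ and therefore $\deg p_{ij}<\deg z_j$ (the term $Q(x_i,z_j)1$ having degree $0$, and $\deg z_j\ge1$).

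It remains to upgrade $x_i\cdot z_j-z_j\cdot x_i=p_{ij}x_i$ (ordinary product on the right) to the asserted relation $x_i\cdot z_j\cdot x_i^{-1}=z_j+p_{ij}$ in $\mathcal{O}(G)_J$. Equivalently one must check $x_i\cdot z_j=(z_j+p_{ij})\cdot x_i$; I would prove this by expanding $(z_j+p_{ij})\cdot x_i$ via (\ref{multj}), (\ref{com}) and the fact that $\Delta$ carries $\mathcal{O}(U)$ into $\mathcal{O}(U)\ot\mathcal{O}(U)$, and matching it against the expansion of $x_i\cdot z_j$ above by repeated use of the Hopf $2$-cocycle identity (\ref{2coc}) with the grouplike $x_i$ as one of its arguments. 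The qualitative content of (2) can alternatively be obtained with no computation: the coradical filtration of $\mathcal{O}(G)$ induces, via (\ref{multj}), an algebra filtration on $\mathcal{O}(G)_J$ in which $x_i,x_i^{-1}$ have degree $0$, and the $T$-action coming from the $G$-action on $\mathcal{O}(G)_J$ (which is by algebra automorphisms) makes $\mathcal{O}(G)_J$ an $X(T)$-graded algebra whose degree-$0$ component is the subalgebra $\mathcal{O}(U)_J$ of part (1); conjugation by the homogeneous unit $x_i$ then preserves $\mathcal{O}(U)_J$ and the filtration, forcing $x_i\cdot z_j\cdot x_i^{-1}\in\mathcal{O}(U)$, of filtration degree $\le\deg z_j$ and with $z_j$-coefficient $1$. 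I expect the main obstacle to be exactly this last bookkeeping — confirming that the lower-degree correction is precisely $p_{ij}$, i.e. running the cross terms coming from $Z_j$ through (\ref{2coc}), rather than just \emph{some} element of $\mathcal{O}(U)$ of degree smaller than $\deg z_j$.
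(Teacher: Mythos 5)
Your proposal follows the same route the paper intends (its proof of this lemma is literally the one line ``Follows from (\ref{multj}) in a straightforward manner''), and the parts you actually carry out are correct: part (1) is exactly right, and your expansion giving $x_i\cdot z_j-z_j\cdot x_i=p_{ij}x_i$ (ordinary product of $p_{ij}$ with $x_i$ on the right), together with the degree bound $\deg p_{ij}<\deg z_j$ coming from the coradical filtration, is a correct and complete computation.

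The one step you leave open is the real sticking point, and you have diagnosed it accurately: passing from $x_i\cdot z_j-z_j\cdot x_i=p_{ij}x_i$ to $x_i\cdot z_j\cdot x_i^{-1}=z_j+p_{ij}$ amounts to the identity $p_{ij}x_i=p_{ij}\cdot x_i$, i.e.\ $\sum Q(x_i,Z_j'')\,(Z_j')_1\,J\bigl((Z_j')_2,x_i\bigr)=\sum Q(x_i,Z_j'')\,Z_j'$, which is not automatic and which you do not verify. Two remarks. First, your fallback grading/filtration argument already yields everything the paper ever uses: multiplying your commutator identity on the right by the $J$-inverse of $x_i$ gives $x_i\cdot z_j\cdot x_i^{-1}-z_j=(\id*\phi)(p_{ij})$ for a linear functional $\phi$ with $\phi(1)=1$, and since $p_{ij}$ lies in the irreducible pointed Hopf algebra $\mathcal{O}(U)$ one gets $(\id*\phi)(p_{ij})=p_{ij}+(\text{terms of strictly lower coradical degree in }\mathcal{O}(U))$; so the conjugate is $z_j$ plus an element of $\mathcal{O}(U)$ of degree $<\deg z_j$, which is the only property of $p_{ij}$ invoked in the proof of Theorem \ref{noethdomnilp}. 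Second, if you insist on the exact expression for $p_{ij}$ in (\ref{gp}), you do need to run the cross terms through (\ref{2coc}) as you indicate; that is a genuine (if routine) computation, not a formality, and as written your proof has not done it. So: essentially the paper's argument, correct in substance, with the exact normalization of the correction term left as an honestly flagged loose end that does not affect the downstream application.
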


\begin{proof}
Follow from (\ref{multj}) in a straightforward manner.
\end{proof}

Set $p_{i}:=(p_{i1},\dots, p_{im})$ for every $1\le i\le k$, and ${\bf x}^{\alpha}:=x_1^{\alpha_1}\cdots x_k^{\alpha_m}$ for every $\alpha=(\alpha_1,\dots,\alpha_k)\in \mathbb{Z}^k$.

\begin{theorem}\label{noethdomnilp}
Let $G=T\times U$ be a connected nilpotent algebraic group over $\mathbb{C}$. Let $J$ be a Hopf $2-$cocycle for $\mathcal{O}(G)$, let $H$ be its support, and let $V\subseteq U$ be the support of the restriction of $J$ to $U$. Let $n:=\dim(V)$, and set $\lambda_{ij}:=R^J(x_i,x_j)$, $1\le i,j\le k$. The following hold:

1) $\mathcal{O}(G)_J\cong (\mathcal{O}(U/V)\ot W(n)) \#_J \mathbb{C}[X(T)]$ is a crossed product with $2-$cocycle $J$, where $X(T)$ acts on $W(n)\ot \mathcal{O}(U/V)$ via automorphisms as in (\ref{gp}).

2) $\mathcal{O}(G)_J$ is a Noetherian domain.
 
3) $\mathcal{O}(H)_J$ is a simple algebra.

4) $(\mathcal{O}(G)_J)^{op}\cong _{J^{-1}}\mathcal{O}(G)$.
\end{theorem}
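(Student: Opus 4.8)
The plan is to combine the two structural results already in hand: Theorem \ref{noethdomunip} (which handles the unipotent part $U$ and gives $\mathcal{O}(U)_J\cong W(n)\ot \mathcal{O}(U/V)$, with the relevant derivations) and Theorem \ref{noethdomtori} (which handles the torus part). Since $\mathcal{O}(G)=\mathcal{O}(T)\ot \mathcal{O}(U)$ as Hopf algebras, the Hopf $2$-cocycle $J$ on $\mathcal{O}(G)$ restricts to Hopf $2$-cocycles on $\mathcal{O}(T)$ and on $\mathcal{O}(U)$, and Lemma \ref{lemmanilp} says that $\mathcal{O}(T)_J$ and $\mathcal{O}(U)_J$ sit inside $\mathcal{O}(G)_J$ as subalgebras, with $\mathcal{O}(G)_J$ spanned by products ${\bf x}^{\alpha}\cdot w$ with $w\in \mathcal{O}(U)_J$. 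So as a vector space $\mathcal{O}(G)_J\cong \mathbb{C}[X(T)]\ot \mathcal{O}(U)_J$. The conjugation formula (\ref{gp}) shows that each $x_i$ acts on $\mathcal{O}(U)_J$ by an automorphism (it is invertible, being conjugation by the unit $x_i$), and that the $x_i$'s themselves multiply subject to the twisted-commutation relations $x_i\cdot x_j=\lambda_{ij}(x_j\cdot x_i)$ coming from $\mathcal{O}(T)_J\cong E(\lambda)$. This is exactly the data of a crossed product $\mathcal{O}(U)_J\#_J\mathbb{C}[X(T)]$, proving part 1) after identifying $\mathcal{O}(U)_J$ with $W(n)\ot \mathcal{O}(U/V)$ via Theorem \ref{noethdomunip}.

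For part 2), the plan is to build $\mathcal{O}(G)_J$ by iterated Ore extensions, just as in the unipotent case. Order the torus generators, and at each step adjoin $x_i^{\pm 1}$ to the algebra generated so far: by (\ref{gp}) and the twisted-commutation relations, conjugation by $x_i$ is an automorphism $\sigma_i$ of the previous algebra, so we obtain a skew-Laurent extension $R[x_i^{\pm1};\sigma_i]$. The Weyl algebra $W(n)\ot\mathcal{O}(U/V)$ is a Noetherian domain (Theorem \ref{noethdomunip}), and a skew-Laurent extension $R[x;\sigma^{\pm1}]$ of a Noetherian domain $R$ by an automorphism $\sigma$ is again a Noetherian domain (this is standard; the domain property needs that $\sigma$ is an automorphism, the Noetherian property is the skew-Laurent version of the Hilbert basis theorem). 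Iterating $k$ times gives that $\mathcal{O}(G)_J$ is a Noetherian domain.

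For part 3), apply the center computation: by Theorem \ref{trivcen} the center of $\mathcal{O}(H)_J$ is $\mathcal{O}(H/H)=\mathbb{C}$, so $\mathcal{O}(H)_J$ has trivial center. One then needs that this forces $\mathcal{O}(H)_J$ to be simple. The cleanest route is to note that $H$ is again a connected nilpotent group, $H=S\times V$ with $S$ the torus support and $V$ the unipotent support (using that the support of the restriction of a minimal cocycle is the whole thing, and compatibility of supports under the product decomposition), so part 1) applies to $\mathcal{O}(H)_J\cong W(n)\#_J\mathbb{C}[X(S)]$ where now the cocycle on $\mathcal{O}(S)$ is nondegenerate; then invoke the standard fact (as in part 3 of Theorem \ref{noethdomtori}, via \cite{MP}) that such a crossed product of a simple algebra by an abelian group with a nondegenerate twisting and no nontrivial invariant central elements is simple. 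Part 4) is immediate from comparing (\ref{multj}) and (\ref{multjinv}), exactly as in the previous two theorems.

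The main obstacle I expect is part 3) — specifically, justifying rigorously that trivial center implies simplicity for these crossed products, and pinning down that the support $H$ of $J$ decomposes compatibly as $S\times V$ so that the explicit crossed-product description of part 1) transfers to $\mathcal{O}(H)_J$ with a nondegenerate twisting. The Ore-extension bookkeeping in part 2) is routine but one must be careful that the conjugation actions by distinct $x_i$ on the growing algebra are genuinely automorphisms and are mutually compatible with the $\lambda_{ij}$-relations, i.e.\ that the iterated construction really reassembles the crossed product of part 1) rather than some larger algebra.
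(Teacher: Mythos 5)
Your parts 1), 2) and 4) are essentially the paper's argument: 1) is assembled from Lemma \ref{lemmanilp}, Theorem \ref{noethdomunip} and Theorem \ref{noethdomtori}; 2) the paper simply cites \cite[Theorem 5.12]{MR}, which is the iterated skew--Laurent extension result you describe; 4) is read off from (\ref{multj}) and (\ref{multjinv}). The problem is part 3), and it is exactly the point you flag yourself. ``Trivial center implies simple'' is not a theorem for crossed products of the form $(W(n)\ot\mathcal{O}(U/V))\#_J\mathbb{C}[X(T)]$, and the ``standard fact'' you invoke is not standard in the generality you need: \cite[Proposition 1.3]{MP} is a statement about quantum tori $E(q)$ only, and the general simplicity criteria for crossed products $B\ast G$ with $B$ simple require control of the subgroup of $G$ acting by (generalized) inner automorphisms of $B$ and of the associated twisted group algebra over the center --- none of which follows from knowing that the center of the whole crossed product is $\mathbb{C}$. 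Here the automorphisms of $W(n)$ induced by conjugation by the $x_i$ are not a priori outer, so the reduction you propose leaves the essential step unproved (and taking ``trivial center $\Rightarrow$ simple'' as known would be circular, since it is part of what the theorem asserts). Your auxiliary claim that the support decomposes as $H=S\times V$ also needs an argument (it does hold, via Jordan decomposition inside the nilpotent group $T\times U$, but you must check that $H\cap T$ and $H\cap U$ really are the supports of the two restrictions).

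The paper closes this gap by a direct minimal-counterexample computation rather than by quoting a crossed-product simplicity criterion. Writing elements of a nonzero ideal $I$ as $a=\sum_\alpha f_\alpha({\bf z}){\bf x}^\alpha$ with $f_\alpha\in\mathcal{O}(U/V)\ot W(n)$, one picks $a\in I$ of minimal length and with free coefficient $f_0({\bf z})$ of minimal degree. Commutators with the $z_j$ shorten the length, commutators with the Weyl generators $u_j,v_j$ lower the degree of $f_0$, and conjugation by $x_i$ sends $f_\alpha({\bf z})$ to $(\prod_j\lambda_{ij}^{\alpha_j})f_\alpha({\bf z}+p_i)$, so that $x_i\cdot a\cdot x_i^{-1}-a$ either vanishes or has free coefficient of strictly smaller degree. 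Minimality forces $a$ to commute with all generators, hence to be central, hence a nonzero scalar by Theorem \ref{trivcen}; so $I$ is everything. This elementary argument is the missing ingredient in your proposal: Theorem \ref{trivcen} is still the key input, but it enters by showing that the central element produced by the descent must be a constant, not by a general implication from trivial center to simplicity.
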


\begin{proof}
1) Follows from Lemma \ref{lemmanilp}, Theorem \ref{noethdomunip}, and Theorem \ref{noethdomtori}.

2) Follows from 1), and \cite[Theorem 5.12]{MR}.

3) Let $\mathcal{O}(U/V)=\mathbb{C}[z_1,\dots, z_l]$, $W(n)=\mathbb{C}[u_1,v_1,\dots,u_n,v_n]$, and set ${\bf z}:=(z_1,\dots, z_l,u_1,v_1,\dots,u_n,v_n)$. By fixing an order on the monomials in $u_1,v_1,\dots,u_n,v_n$, we may view elements in $W(n)$ as polynomials.

Let $I$ be a nonzero ideal of $A:=\mathcal{O}(H)_J$, and 
let $a=\sum_{\alpha} f_{\alpha}({\bf z}) {\bf x}^{\alpha}$ ($0\ne f_{\alpha}({\bf z})\in \mathcal{O}(U/V)\ot W(n)$) be a nonzero element in $I$ with minimal length (= number of distinct $\alpha$ involved in the expression). We may assume that $f_0({\bf z})\ne 0$, and choose $a$ to have a nonzero free coefficient $f_0({\bf z})$ of minimal degree. 

Consider the element $z_j\cdot a - a\cdot z_j\in I$ for every $1\le j\le l$. Since $z_j\cdot f_0({\bf z})=f_0({\bf z})\cdot z_j$, we see that $z_j\cdot a - a\cdot z_j$ has shorter length than $a$. Thus, $a$ commutes with $z_j$ for every $1\le j\le l$.

Consider the element $u_j\cdot a - a\cdot u_j\in I$ for every $1\le j\le n$. Since $$u_j\cdot f_0({\bf z})=f_0({\bf z})\cdot u_j+\frac{\partial(f_0({\bf z}))}{\partial v_j},$$ we see that $u_j\cdot a - a\cdot u_j$ has free coefficient with smaller degree than $f_0({\bf z})$. Thus, $a$ commutes with $u_j$ for every $1\le j\le n$.

Similarly, $a$ commutes with $v_j$ for every $1\le j\le n$.

Finally consider the element $x_i\cdot a\cdot x_i^{-1}-a\in I$ for every $1\le i\le k$. By Lemma \ref{lemmanilp},
$$x_i\cdot a\cdot x_i^{-1}=\sum_{\alpha} (\Pi_{j=1}^k \lambda_{ij}^{\alpha_j})f_{\alpha}({\bf z}+p_{i}){\bf x}^{\alpha},$$ so
$$x_i\cdot a\cdot x_i^{-1}-a=\sum_{\alpha} \left((\Pi_{j=1}^k \lambda_{ij}^{\alpha_j})f_{\alpha}({\bf z}+p_{i})-f_{\alpha}({\bf z})\right) {\bf x}^{\alpha}.$$ If $a$ commutes with $x_i$ for every $1\le i\le k$ then $a$ is central in $A$, hence is a nonzero constant by Theorem \ref{trivcen}, and we are done. Otherwise, let $i$ be such that $a$ does not commute with $x_i$. Then 
$0\neq x_i\cdot a\cdot x_i^{-1}-a\in I$ must have the same length as $a$. But $f_{0}({\bf z}+p_i)-f_{0}({\bf z})$ has smaller degree than $f_{0}({\bf z})$, which is a contradiction.
\end{proof}

\begin{example} 
Let $G=\mathbb{G}_m\times \mathbb{G}_m\times \mathbb{G}_a$. Then $\mathcal{O}(G)=\mathbb{C}[x^{\pm 1},y^{\pm 1},z]$, where $x,y$ are grouplike elements and $z$ is a primitive element. 
The Lie algebra of $G$ is abelian and has basis $X:=x\frac{\partial}{\partial x}$, $Y:=y\frac{\partial}{\partial y}$, $Z:=\frac{\partial}{\partial z}$. Consider the classical $r-$matrix $r:=X\wedge (hY+Z)$, where $h\in \mathbb{C}^*$, and let $J:=e^{r/2}$. Then $\mathcal{O}(G)_J$ has generators $x,y,z$ with relations $xy=e^hyx$, $yz=zy$, and $xz=(z+1)x$. Setting $D:=x^{-1}z$, we have $Dy=e^{-h}yD$, $xy=e^hyx$, and $[D,x]=1$, so $\mathcal{O}(G)_J$ is a smash product of $\mathbb{C}[\mathbb{Z}]$ ($\mathbb{Z}$ is generated by $y$) with 
the algebra of differential operators on $\mathbb{G}_m$ generated by $D$ and $x^{\pm 1}$.
 
Note that if $h\notin \pi i \mathbb{Q}$ then 
$J$ is minimal, 
and $\mathcal{O}(G)_J$ is a simple algebra. 
\end{example}

\section{Hopf $2-$cocycles for nilpotent algebraic groups}\label{nilpsec}

Recall that using Etingof--Kazhdan quantization theory \cite{EK1,EK2,EK3}, it was proved in \cite{EG2} that gauge equivalence classes of Hopf 2-cocycles for unipotent algebraic groups $U$ over $\mathbb{C}$ are in one to one correspondence with solutions $r\in\wedge^2\Lie(U)$ to the CYBE (\ref{cybe}). We can now give an alternative proof to this result, which uses Theorem \ref{noethdomunip}, together with \cite[Theorem 5.5]{EG1} and a result of Drinfeld (see Section \ref{drinres}), instead.

\begin{theorem}\label{qfunip}
Let $U$ be a unipotent algebraic group over $\mathbb{C}$, and let $J$ be a minimal Hopf $2-$cocycle for $\mathcal{O}(U)$. Then $U$
admits a left invariant symplectic structure $\omega$, i.e., $\uu:=\Lie(U)$ is a quasi-Frobenius Lie algebra with $\omega\in\wedge^2\uu^*$. In particular $\dim(U)$ is even.
\end{theorem}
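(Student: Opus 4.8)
The plan is to deduce the quasi-Frobenius property of $\uu$ from the simplicity of the Weyl algebra description of $\mathcal{O}(U)_J$ obtained in Theorem \ref{noethdomunip}, combined with the known construction of Hopf $2$-cocycles from classical $r$-matrices. First I would recall from \cite[Theorem 5.5]{EG1} that every solution $r\in\wedge^2\uu$ to the CYBE \eqref{cybe} gives rise to a Hopf $2$-cocycle $J(r)=e^{r/2}$ for $\mathcal{O}(U)$, whose support is the connected unipotent subgroup $V\subseteq U$ whose Lie algebra $\mathfrak v$ is the ``carrier'' of $r$ (i.e.\ $r$ actually lies in $\wedge^2\mathfrak v$ and is nondegenerate there); by Drinfeld's correspondence (see Section \ref{drinres}), $\mathfrak v$ is quasi-Frobenius with symplectic form $\omega = r^{-1}$. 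So the crux is to show that an \emph{arbitrary} minimal Hopf $2$-cocycle $J$ for $\mathcal{O}(U)$ is gauge equivalent to one of the form $J(r)$ with $r$ nondegenerate on all of $\uu$; then $\omega=r^{-1}\in\wedge^2\uu^*$ is the desired symplectic form and $\dim(U)$ is automatically even.

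The key step is to pin down the rank of $r$ using the ring structure. Here is the mechanism I would use: if $J$ is gauge equivalent to $J(r)$ for some $r\in\wedge^2\uu$ with carrier $\mathfrak v$, then by Theorem \ref{noethdomunip}(1) we have $\mathcal{O}(U)_{J(r)}\cong W(\dim V)\ot \mathcal{O}(U/V)$ where $\dim V = \dim\mathfrak v = \rank(r)$, and by Theorem \ref{trivcen} the center of this algebra is $\mathcal{O}(U/V)$. On the other hand, minimality of $J$ forces, via Theorem \ref{trivcen} (or Theorem \ref{support}), that the support of $J$ is all of $U$, hence $V=U$, hence $\mathfrak v=\uu$ and $r$ is nondegenerate. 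So the only thing I really need is that \emph{some} minimal $J$ is gauge equivalent to a twist of CYBE type. For unipotent groups this is exactly the content of the classification \cite[Theorem 3.2]{EG2} (or \cite[Corollary 3.5]{EG2}): every Hopf $2$-cocycle for $\mathcal{O}(U)$ is gauge equivalent to $J(r)$ for a solution $r$ of the CYBE, and minimal ones correspond to nondegenerate $r$. However, since the stated goal of Section \ref{nilpsec} is to give an alternative proof \emph{not} relying on Etingof--Kazhdan quantization (which is what \cite{EG2} uses), I should instead argue directly.

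The direct argument I would carry out: by Theorem \ref{noethdomunip}(2), minimality of $J$ gives $\mathcal{O}(U)_J\cong W(\dim U)$, a Weyl algebra. Now run the induction in the proof of Theorem \ref{noethdomunip} and extract the commutation data. At the top extension $1\to\mathbb{G}_a\to U\to\bar U\to 1$, minimality of $J$ and Case analysis forces Case 2 with $\dim\bar L = \dim U - 2$ at each stage only if\dots — more precisely, I would track the cocycle $Q=J-J_{21}$ restricted to $\uu^+\ot\uu^+/(\text{higher order})$, which by \eqref{com} and \eqref{multj} descends to a bilinear form $\bar Q$ on $\uu$; the Hopf $2$-cocycle condition \eqref{2coc} for $J$ translates (to leading order in the coradical filtration) into the statement that $\bar Q$ is a $2$-cocycle on $\uu$, i.e.\ satisfies \eqref{2cocy}, and $R^J$ nondegenerate forces $\bar Q$ nondegenerate. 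Setting $\omega := \bar Q$ (suitably normalized, skew-symmetrized) gives the left invariant symplectic structure. The evenness of $\dim U$ is then immediate since a nondegenerate skew form exists only in even dimension.

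The main obstacle is the passage from ``$J$ is a Hopf $2$-cocycle'' to ``the leading term $\bar Q$ is a genuine Lie $2$-cocycle on $\uu$, and its nondegeneracy is equivalent to minimality of $J$.'' The nondegeneracy direction is the delicate one: a priori $R^J$ nondegenerate on the infinite-dimensional $\mathcal{O}(U)^J$ need not obviously force nondegeneracy of the finite-dimensional leading form $\bar Q$ on $\uu$ — one must rule out the possibility that nondegeneracy of $R^J$ is ``spread out'' over higher coradical degrees while $\bar Q$ degenerates. I expect to handle this exactly as in the proof of Theorem \ref{noethdomunip}: if $\bar Q$ had a kernel, pick a primitive-type element $y$ of minimal degree in that kernel direction as in Case 2, and show it would be central in $\mathcal{O}(U)_J$, contradicting triviality of the center (Theorem \ref{trivcen}) that minimality guarantees. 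Once that is in place, the identification $\omega = r^{-1}$ with a nondegenerate $r\in\wedge^2\uu$ solving the CYBE follows from Drinfeld's theorem (Section \ref{drinres}), and the theorem is proved.
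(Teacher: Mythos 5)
Your reduction to ``the leading term of $J$ is a nondegenerate solution of the CYBE'' has two genuine gaps, and both occur exactly at the hard points. First, the object you extract from $Q=J-J_{21}$ lives in $\wedge^2\uu$, not $\wedge^2\uu^*$: restricting $Q$ to $\mathcal{O}(U)^+\ot\mathcal{O}(U)^+$ and passing to $\mathcal{O}(U)^+/(\mathcal{O}(U)^+)^2\cong\uu^*$ yields a bivector $r$ (a candidate classical $r$-matrix), and the semiclassical statement you need is that $r$ satisfies the CYBE (\ref{cybe}), not that a form satisfies (\ref{2cocy}). Extracting this from (\ref{2coc}) without a formal parameter is not a formality; moreover for nonabelian $\uu$ the primitive elements of $\mathcal{O}(U)$ only span $(\uu/[\uu,\uu])^*$, so ``the restriction of $Q$ to $\uu\ot\uu$'' must be defined through the coradical filtration and shown to be well defined. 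Second, and more seriously, your mechanism for the nondegeneracy step does not close. If $r$ is degenerate you propose to produce a central element of $\mathcal{O}(U)_J$ from a kernel direction; but by Lemma \ref{lemmaunip}(3) centrality of such an element $y$ requires the vanishing of $Q(y,\psi)$ for \emph{all} $\psi$ together with the higher-order correction terms $\sum Z'Q(Z'',\cdot)$, whereas lying in the kernel of the leading term only controls $Q$ modulo $(\mathcal{O}(U)^+)^2$. Remark \ref{Q} runs in the opposite direction (central $\Rightarrow$ $Q$ vanishes), so Theorem \ref{trivcen} cannot be invoked the way you want; in effect you would be assuming the classification \cite[Theorem 3.2]{EG2} that this theorem is meant to reprove.

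The paper's proof is entirely different and avoids the semiclassical limit. It uses the left translation action of $U$ on $\mathcal{O}(U)_J$, which by Theorem \ref{noethdomunip} is a Weyl algebra; since every derivation of a Weyl algebra is inner, choosing $w_x$ so that $x\in\uu$ acts as $\ad w_x$ produces a Lie $2-$cocycle $\omega\in\wedge^2\uu^*$ via $w_{[x,y]}=\omega(x,y)+w_xw_y-w_yw_x$, hence a central extension $\hat{\uu}$ with $U(\hat{\uu})/(z-1)\cong\mathcal{O}(U)_J$ as $U-$algebras. Symmetrization shows $\Sym(\hat{\uu})/(z-1)$ is the regular representation of $U$, so its Poisson center is trivial; a Kostant--Rosenlicht argument then shows the hyperplane $\{f\in\hat{\uu}^*\mid f(z)=1\}$ is a single coadjoint orbit on which $U$ acts freely, and the resulting invariant symplectic structure is given by $\omega$ at the identity. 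That is where nondegeneracy comes from. Your observation that $\dim(U)$ is even because $\mathcal{O}(U)_J$ is a Weyl algebra is fine, but the quasi-Frobenius structure itself needs the orbit argument (or something equivalent), which your sketch does not supply.
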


\begin{proof} \footnote{I am indebted to Pavel Etingof and Eric Rains for their help with this proof.}
Consider the left action of $U$ on $\mathcal{O}(U)_J$ by algebra automorphisms (via left translations). It yields an action of $\uu$ on $\mathcal{O}(U)_J$ by derivations. Since by Theorem \ref{noethdomunip}, $\mathcal{O}(U)_J$ is a Weyl algebra, this action determines a $2-$cocycle $\omega\in \wedge^2\uu^*$. Namely, for every $x\in\uu$ choose $w_x\in \mathcal{O}(U)_J$ such that $x$ acts as the inner derivation given by $w_x$. Then $w_{[x,y]}=\omega(x,y)+ w_x w_y- w_y w_x$ for every $x,y\in \uu$. Now it is well known that the $2-$cocycle $\omega$ defines a 
central extension $\hat \uu$ of $\uu$ by the $1-$dimensional Lie algebra $\mathbb{C}$ (see Section \ref{rext}). Let $z\in \hat \uu$ be the corresponding central element. 

The algebra $U(\hat \uu)/(z-1)$ has a conjugation action of $U$, and we see that the above assignment $x\mapsto w_x$ determines an isomorphism $U(\hat \uu)/(z-1)\xrightarrow{\cong} \mathcal{O}(U)_J$ of $U-$algebras. In particular, $U(\hat \uu)/(z-1)$ is isomorphic to the regular representation of $U$ as a module. 
By using the symmetrization map $\Sym(\hat \uu)\xrightarrow{\cong} U(\hat \uu)$, which is a $U-$module map, we see that $\Sym(\hat \uu)/(z-1)$ is isomorphic to the regular representation of $U$. This implies that the Poisson center $(\Sym(\hat \uu)/(z-1))^U$ is $1-$dimensional. So the only $U-$invariant function on the set $L$ 
of elements $f\in \hat \uu^*$ satisfying $f(z)=1$ is a constant.
Take an orbit $X$
of $U$ on $L$. It is closed by the theorem of Kostant and Rosenlicht. Let $a_1,\dots,a_m$ be generators of the ideal of $X$, and let $V$ be the $U-$module generated by them. Since 
the action of $U$ is locally finite, $V$ is finite dimensional. Let $f\in V$ be a nonzero $U-$invariant element (which exists because $U$ is unipotent). Then $f$ is a nonzero constant (since all invariants are constant). So $X=L$ is a coadjoint orbit of $V$, on which $U$ acts freely. So $U$ has a left invariant symplectic structure (the canonical $U-$invariant symplectic structure of $L$), which is given at the identity by $\omega$. So $\omega$ is nondegenerate and $\uu$ is quasi-Frobenius, as claimed. 
\end{proof}

\begin{corollary}\label{revis}
Let $U$ be a unipotent algebraic group over $\mathbb{C}$ with Lie algebra $\uu$. The following sets are in bijection one with each other:

1) Gauge equivalence classes of Hopf $2-$cocycles for $\mathcal{O}(U)$.

2) Equivalence classes of pairs $(V,\omega)$, where $V$ is a closed subgroup of $U$ and $\omega$ is a left invariant symplectic form on $V$. 

3) Equivalence classes of solutions $r\in \wedge^2\uu$ to the CYBE (\ref{cybe}).
\qed
\end{corollary}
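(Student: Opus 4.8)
The plan is to build mutually inverse bijections $(1)\leftrightarrow(2)$ and $(2)\leftrightarrow(3)$. The second is essentially Drinfeld's theorem translated to the group level: since $U$ is unipotent in characteristic $0$, the exponential map identifies Lie subalgebras $\mathfrak v\subseteq\uu$ with closed (automatically connected) subgroups $V\subseteq U$, and under this a left invariant symplectic form on $V$ is exactly a nondegenerate $2$-cocycle $\omega\in\wedge^2\mathfrak v^{*}$, i.e.\ a quasi-Frobenius structure on $\mathfrak v$; conjugation by $U$ matches conjugation by $\uu$. So by Drinfeld's theorem (Section \ref{drinres}) the assignment $r\mapsto(\mathfrak v,\omega)$ with $r=\omega^{-1}$ identifies equivalence classes of CYBE solutions $r\in\wedge^2\uu$ with equivalence classes of pairs $(V,\omega)$; I expect this step to be routine.

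For the map $(1)\to(2)$: given a Hopf $2$-cocycle $J$ for $\mathcal O(U)$, Theorem \ref{support} gives (up to conjugation) its support $V\subseteq U$ and a gauge-equivalent \emph{minimal} cocycle $J'$ for $\mathcal O(V)$. By Theorem \ref{qfunip} applied to $J'$ the Lie algebra $\mathfrak v=\Lie(V)$ is quasi-Frobenius, and the proof of that theorem attaches to $J'$ a nondegenerate $\omega\in\wedge^2\mathfrak v^{*}$ through the relation $w_{[x,y]}=\omega(x,y)+w_xw_y-w_yw_x$, together with a $V$-equivariant algebra isomorphism $\mathcal O(V)_{J'}\cong U(\widehat{\mathfrak v})/(z-1)$, where $\widehat{\mathfrak v}$ is the central extension of $\mathfrak v$ determined by $\omega$ (Section \ref{rext}). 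We set $[J]\mapsto(V,\omega)$. Two points must be checked: (i) the only freedom in the $w_x$ is to replace each by $w_x+c_x$ for a linear functional $c\in\mathfrak v^{*}$, which alters $\omega$ by the coboundary $\delta c$; since $\omega$ is nondegenerate, $c=\omega(X,-)$ for a unique $X\in\mathfrak v$, and $\delta(\omega(X,-))$ is precisely the change in $\omega$ produced by conjugating by $\exp X\in V$, so $(V,\omega)$ is well defined up to the equivalence relation of $(2)$; (ii) gauge-equivalent $J$'s give isomorphic $V$-algebras $\mathcal O(V)_{J'}$, hence the same $\omega$.

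For the reverse map, send $(V,\omega)$ to $[J(\omega^{-1})]$, where for $r:=\omega^{-1}\in\wedge^2\mathfrak v\subseteq\wedge^2\uu$ we take the Hopf $2$-cocycle $J(r)=e^{r/2}$ for $\mathcal O(U)$ (\cite[Section 4]{EG1}, \cite[Theorem 5.5]{EG1}). From the explicit form of $R^{J(r)}$ given in \cite{EG1} one checks that its support is $V$, and then Theorem \ref{noethdomunip} together with the recipe above shows that the symplectic form attached to $J(r)$ is again $\omega$. It remains to see that the two maps are mutually inverse, which reduces to a single claim: a minimal Hopf $2$-cocycle $J'$ for $\mathcal O(V)$ is determined up to gauge equivalence by its symplectic form $\omega$. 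For this I would use that the gauge class of $J'$ is the same data as the isomorphism class of the $\mathcal O(V)$-comodule algebra $\mathcal O(V)_{J'}$ (its associated cleft Galois object), and that by the isomorphism $\mathcal O(V)_{J'}\cong U(\widehat{\mathfrak v})/(z-1)$ — an isomorphism of rational $V$-module, equivalently of $\mathcal O(V)$-comodule, algebras — this class depends only on $\widehat{\mathfrak v}$, i.e.\ only on $\omega$ up to the coboundary already absorbed in $(2)$; applying this to $J'$ and to $J(\omega^{-1})$, which share the same associated form, yields $J'$ gauge equivalent to $J(\omega^{-1})$ and closes the loop.

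The main obstacle is exactly this last reconstruction step — recovering the comodule algebra $\mathcal O(V)_{J'}$, hence the gauge class of $J'$, from $\widehat{\mathfrak v}$ together with its $V$-action — along with the attendant bookkeeping that $J(\omega^{-1})$ genuinely has support $V$ and form $\omega$. The remaining ingredients (transport of the support along gauge equivalences via Theorem \ref{support}, the Lie-theoretic dictionary above, and Drinfeld's theorem) are routine.
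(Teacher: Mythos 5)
Your proposal is correct and follows the same route the paper intends: the corollary is stated with no written proof, as a combination of Theorem \ref{support} and Theorem \ref{qfunip} for the map $(1)\to(2)$, Drinfeld's classification for $(2)\leftrightarrow(3)$, and the construction $r\mapsto J(r)$ from \cite[Theorem 5.5]{EG1} for the inverse direction. The one piece you add beyond what the paper records --- recovering the gauge class of a minimal $J'$ from the $\mathcal{O}(V)$-comodule algebra $\mathcal{O}(V)_{J'}\cong U(\widehat{\mathfrak v})/(z-1)$ via the cleft Galois object correspondence --- is a legitimate way to close the injectivity gap the paper leaves implicit, with the only caveat that your identification of $\delta(\omega(X,-))$ with conjugation by $\exp X$ is an infinitesimal statement; the global version is supplied by the coadjoint-orbit argument already present in the proof of Theorem \ref{qfunip}.
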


From now on let $G=T\times U$ be a connected nilpotent algebraic group over $\mathbb{C}$, where $T$ is a torus and $U$ is a unipotent group. Let $\g$, $\mathfrak{t}$ and $\uu$ be the Lie algebras of $G$, $T$ and $U$, respectively. Then $\mathfrak{t}$ is abelian, $\uu$ is nilpotent, and $\g=\mathfrak{t}+\uu$ is a direct sum of Lie algebras.

\begin{theorem}\label{nilp} Gauge
equivalence classes of Hopf 2-cocycles for $\mathcal{O}(G)$ are in correspondence with solutions of the
CYBE (\ref{cybe}) in $\wedge^2\g$, given by $J_f(r)\leftarrow r$, where $f$ is any universal quantization formula.
\end{theorem}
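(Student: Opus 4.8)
\textbf{Proof proposal for Theorem \ref{nilp}.} The plan is to reduce the classification of Hopf $2$-cocycles for $\mathcal{O}(G)$, $G=T\times U$, to the already-known classifications for tori (Theorem \ref{h2ctor}) and for unipotent groups (Corollary \ref{revis}), glued together by the structural description of $\mathcal{O}(G)_J$ from Theorem \ref{noethdomnilp}. First I would recall that, since $\mathcal{O}(G)=\mathcal{O}(T)\otimes\mathcal{O}(U)$ and $\g=\mathfrak{t}\oplus\uu$, a solution $r\in\wedge^2\g$ decomposes as $r=r_T+r_{TU}+r_U$ with $r_T\in\wedge^2\mathfrak{t}$, $r_U\in\wedge^2\uu$, and $r_{TU}\in\mathfrak{t}\otimes\uu$; the CYBE for $r$ unwinds (using that $\mathfrak{t}$ is central abelian) into the CYBE for $r_U$ together with a compatibility condition forcing $r_{TU}$ to land in $\mathfrak{t}\otimes\mathfrak{z}$ for a suitable subalgebra of $\uu$ on which the relevant bracket vanishes. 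Applying a universal quantization formula $f$ to such an $r$ produces $J_f(r)$, and the point is that this assignment is well defined on gauge equivalence classes and is a bijection.

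Next I would set up the inverse map. Given a Hopf $2$-cocycle $J$ for $\mathcal{O}(G)$, Theorem \ref{support} provides its support $H\subseteq G$ and Theorem \ref{noethdomnilp} describes $\mathcal{O}(G)_J$ as the crossed product $(\mathcal{O}(U/V)\otimes W(n))\#_J\mathbb{C}[X(T)]$, where $V\subseteq U$ is the support of $J|_U$ and $X(T)$ acts via \eqref{gp}. The restrictions $J|_{\mathcal{O}(T)}$ and $J|_{\mathcal{O}(U)}$ are themselves Hopf $2$-cocycles (Lemma \ref{lemmanilp}(1)), so Theorem \ref{h2ctor} attaches to $J|_T$ an alternating bicharacter on $X(T)$ — equivalently, after passing to Lie algebras, an element $r_T\in\wedge^2\mathfrak{t}$ — and Corollary \ref{revis} attaches to $J|_U$ a CYBE solution $r_U\in\wedge^2\uu$. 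The cross term $r_{TU}$ is extracted from the action of $X(T)$ on the unipotent part, i.e.\ from the polynomials $p_{ij}$ in \eqref{gp}: their lowest-degree parts encode a linear map $X(T)\to\uu^*$, hence an element of $\mathfrak{t}^*\otimes\uu$, which one checks (using that $p_{ij}$ has strictly smaller degree than $z_j$, together with the cocycle identity) is actually skew and lies in the correct subspace. Assembling $r:=r_T+r_{TU}+r_U\in\wedge^2\g$ and verifying it solves the CYBE gives the inverse.

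The bijectivity then comes in two halves. That $J_f(r)$ recovers $r$ (injectivity of $r\mapsto J_f(r)$) follows because a universal quantization formula, evaluated on $\mathcal{O}(G)$, reproduces $r$ in its lowest-order term: $J_f(r)=\varepsilon\otimes\varepsilon + \tfrac12 r + (\text{higher order})$ in the appropriate filtration, so $r$ is read off from $\mathcal{O}(G)_J$ modulo the square of the augmentation-type filtration; this is exactly the mechanism already used in Examples \ref{multgr2}, \ref{addgr2}, \ref{heisen}. For surjectivity one shows that an arbitrary $J$ is gauge equivalent to some $J_f(r)$: one first gauges $J|_T$ into standard form by Theorem \ref{h2ctor}, then gauges $J|_U$ into the form $J_f(r_U)$ by Corollary \ref{revis}, and finally uses that the residual data is exactly the cross term, which the quantization of $r_{TU}$ matches on the nose because $r_{TU}\in\mathfrak{t}\otimes\mathfrak{z}$ makes the two tensor factors interact only through a ``differential-operator'' twist of the type appearing in the last Example.

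\textbf{Main obstacle.} I expect the delicate point to be the cross term: proving that the data linking the torus and unipotent directions is governed by an honest element $r_{TU}\in\wedge^2\g$ satisfying CYBE (and not some more complicated non-abelian cocycle), and that gauge transformations act on the triple $(r_T,r_{TU},r_U)$ compatibly so that the three pieces can be normalized independently. Here one must use that $\mathfrak{t}$ is abelian and central in $\g$, that quantization is functorial and universal (so it respects the Hopf-algebra decomposition $\mathcal{O}(G)=\mathcal{O}(T)\otimes\mathcal{O}(U)$ up to gauge), and Drinfeld's correspondence from Section \ref{drinres} to match the CYBE solution with the quasi-Frobenius subalgebra that Theorem \ref{noethdomnilp} exhibits as the Lie algebra of $H$. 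Everything else is bookkeeping across the already-established theorems.
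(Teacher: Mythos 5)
Your overall strategy (split $r$ as $r_T+r_{TU}+r_U$, recover $r_T$ and $r_U$ from the restrictions $J|_{\mathcal{O}(T)}$ and $J|_{\mathcal{O}(U)}$, and extract the cross term from the lowest-degree parts of the $p_{ij}$) is not the paper's, and the step you yourself flag as the ``main obstacle'' is a genuine gap rather than bookkeeping. You never prove that the residual data left after normalizing $J|_T$ and $J|_U$ is governed by an honest element $r_{TU}\in\mathfrak{t}\wedge\uu$ satisfying the required compatibility $[u_i\otimes 1+1\otimes u_i,r_U]=0$, nor that $J$ is then gauge equivalent to $J_f(r_T+r_{TU}+r_U)$; asserting that the quantization of $r_{TU}$ ``matches on the nose'' is precisely the content that needs proof. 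The paper sidesteps all of this by a convolution factorization: it sets $J_s:=J|_{\mathcal{O}(T)}$, views it as a Hopf $2$-cocycle for $\mathcal{O}(G)$, and observes that $J_s^{-1}*J$ is trivial on $\mathcal{O}(T)$, hence defines a \emph{unipotent} fiber functor on $\Rep(G)$ (one that is standard on $\Rep(G/U)=\Rep(T)$). By \cite[Theorem 3.2]{EG2} such functors are classified by solutions of the CYBE lying in $\g\wedge\uu$ --- a single external input that absorbs your $r_U$ and your cross term $r_{TU}$ simultaneously --- and then $J=J_s*(J_s^{-1}*J)$ finishes the argument. If you wish to avoid citing that result, you must actually carry out the cross-term analysis, which your proposal does not do. (The separate classification of the possible shapes of $r$ is then Proposition \ref{solcybe}, which is where the decomposition $s+(w-w_{21})+r$ and the condition on the $u_i$ are established.)

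A second, independent error: you claim the correspondence is a bijection and argue injectivity by reading $r$ off from the lowest-order term of $J_f(r)$. This works on the unipotent part, where $\mathcal{O}(U)$ is irreducible pointed and the coradical filtration begins at $\mathbb{C}$, but it fails on the torus part: $\mathcal{O}(T)$ is cosemisimple, the cocycle only remembers the exponentials $\lambda_{ij}=R^J(x_i,x_j)$ and not the entries of $r_T$ themselves, so distinct elements of $\wedge^2\mathfrak{t}$ produce gauge-equivalent cocycles (this is exactly the phenomenon in Example \ref{multgr2}). For this reason the theorem asserts only a ``correspondence,'' not a bijection, and the paper's proof explicitly remarks that the torus correspondence is not unique.
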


\begin{proof}
Let $J$ be a Hopf $2-$cocycle for $\mathcal{O}(G)$, and let $J_s$ be its restriction to $\mathcal{O}(T)$. By \cite{EG1}, $J_s$ corresponds to a solution of the CYBE in $\wedge^2\mathfrak{t}$ (not uniquely though; see, e.g., Example \ref{multgr2} above).
Viewing $J_s$ as a Hopf $2-$cocycle for $\mathcal{O}(G)$ we can consider the Hopf $2-$cocycle $J_s^{-1}*J$ for $\mathcal{O}(G)$. It is clear that the Hopf $2-$cocycle $J_s^{-1}*J$ is trivial on $\mathcal{O}(T)$, and hence defines a unipotent fiber functor on $\Rep(G)$ (i.e., a fiber functor which coincides with the standard one on the semisimple subcategory $\Rep(G/U)=\Rep(T)$). 
Therefore by \cite[Theorem 3.2]{EG2}, $J_s^{-1}*J$ corresponds to a (unique) solution of the CYBE (\ref{cybe}) in $\g\wedge \mathfrak{u}$, and hence $J=J_s*(J_s^{-1}*J)$ corresponds to a solution of the CYBE (\ref{cybe}) in $\wedge^2\g$, as claimed.
\end{proof}

It thus remains to classify solutions to the CYBE (\ref{cybe}) in $\wedge^2\g$.

\begin{proposition}\label{solcybe}
Every solution to the CYBE (\ref{cybe}) in $\wedge^2\g=\wedge^2(\mathfrak{t}+\uu)$ has the form $s+(w-w_{21})+r$, where $s\in \wedge^2\mathfrak{t}$ is arbitrary, 
$r$ is a solution to the CYBE (\ref{cybe}) in $\wedge^2 \uu$, and $w=\sum t_i\otimes u_i\in \mathfrak{t}\otimes \uu$, where $\{t_i\}$ is a basis of $\mathfrak{t}$, and $u_i$ are some elements of $\uu$ such that $[u_i\otimes 1+1\otimes u_i,r]=0$ for every $i$.
\end{proposition}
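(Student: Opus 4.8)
The plan is to classify solutions $\rho \in \wedge^2\g$ of the CYBE by decomposing $\g = \mathfrak{t} + \uu$ and writing $\rho = s + w + r$ where $s \in \wedge^2\mathfrak{t}$, $w \in \mathfrak{t}\wedge\uu$, and $r \in \wedge^2\uu$, then extracting what the CYBE forces on each piece. Since $\g = \mathfrak{t}\oplus\uu$ is a direct sum of ideals with $\mathfrak{t}$ central, the bracket is very constrained: $[\mathfrak{t},\g]=0$. So I would substitute $\rho = s+w+r$ into $[\rho_{12},\rho_{13}]+[\rho_{12},\rho_{23}]+[\rho_{13},\rho_{23}]=0$ and sort the resulting terms by their ``type'' according to which tensor legs lie in $\mathfrak{t}$ versus $\uu$. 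Because $\mathfrak{t}$ is central, any bracket involving a $\mathfrak{t}$-leg kills that contribution: concretely $[s_{12},\cdot]$ vanishes in legs $1,2$, $[w_{12},\cdot]$ only survives through its $\uu$-leg, etc. A careful bookkeeping then shows the CYBE for $\rho$ reduces exactly to: (i) the $\wedge^2\uu$-type part gives $\mathrm{CYBE}(r)=0$ together with the cross term $[w_{13}+w_{23}, r_{\text{relevant}}]$-type conditions, and (ii) $s$ drops out entirely (it is a solution in $\wedge^2\mathfrak{t}$ for trivial reasons, and it cannot interact with anything). This is how one sees $s$ is arbitrary.

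Next I would pin down the shape of $w$. Write $w = \sum_i t_i \otimes u_i$ with $\{t_i\}$ a fixed basis of $\mathfrak{t}$ and $u_i \in \uu$ (uniquely determined); then the skew part $w - w_{21} \in \wedge^2\g$ is what actually appears, which is why the statement is phrased with $w - w_{21}$. Plugging $w-w_{21}$ into the CYBE and isolating the terms whose first or second leg lies in $\mathfrak{t}$ (these must vanish independently since $\mathfrak t$ is a direct summand and such terms cannot be cancelled by the $\wedge^2\uu$ contributions), the surviving obstruction in each $t_i$-graded component is precisely $[u_i \otimes 1 + 1 \otimes u_i, r] = 0$, i.e.\ $r$ must be $\ad u_i$-invariant (equivalently $u_i$ lies in the ``isotropy'' of $r$ — the Lie subalgebra preserving the quasi-Frobenius subalgebra $\h = \Image(r)$ attached to $r$ by Drinfeld, cf.\ Section~\ref{drinres}). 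There should also be a term bilinear in the $u_i$'s, but since $\mathfrak{t}$ is abelian and the $t_i$-legs are central, $[w_{12}, w_{13}]$-type expressions vanish, so no further constraint among the $u_i$ themselves arises. Conversely, given any such triple $(s, r, \{u_i\})$, reversing the computation shows $s + (w-w_{21}) + r$ solves the CYBE.

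The main obstacle I expect is the combinatorial bookkeeping: organizing the roughly nine bracket terms of $[\rho_{12},\rho_{13}]+\cdots$ after the nine-fold expansion of $\rho = s + (w-w_{21}) + r$, and correctly arguing that terms of different ``$\mathfrak{t}$-leg type'' cannot cancel each other, so that the full CYBE splits as a conjunction of the piece-wise conditions. The cleanest way to make the non-cancellation rigorous is to use the projections $\g^{\otimes 3} \to \mathfrak{t}^{\epsilon_1}\otimes \mathfrak{t}^{\epsilon_2}\otimes\mathfrak{t}^{\epsilon_3}$ (where $\mathfrak{t}^1 = \mathfrak{t}$, $\mathfrak{t}^0 = \uu$) coming from $\g = \mathfrak t\oplus\uu$, note these are compatible with the bracket in the sense that each bracket term has a well-defined such type, and then read off the equation in each graded component separately. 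Once the splitting is established, the ``$\uu$-only'' component is $\mathrm{CYBE}(r)=0$ by definition, the ``exactly one $\mathfrak t$-leg'' components give the invariance conditions $[u_i\otimes 1 + 1\otimes u_i, r]=0$, and the remaining components (two or three $\mathfrak t$-legs) are automatically satisfied because $\mathfrak t$ is central and abelian — yielding no constraint on $s$.

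A minor technical point worth spelling out in the write-up: the identification of the term $[w_{13}, r_{23}] + [w_{23}, r_{13}]$-type contribution (where it sees only the $\uu$-legs of $w$) with $\sum_i [\text{something}] \otimes$-pieces, making the invariance condition appear \emph{exactly} in the form $[u_i \otimes 1 + 1 \otimes u_i, r] = 0$ rather than some a priori weaker one-sided version; this uses that $r$ is already skew-symmetric, so $\ad u_i$ acting on the first leg and on the second leg contribute symmetrically. With that in hand, comparing with Drinfeld's dictionary (Section~\ref{drinres}) and Theorem~\ref{nilp}, this completes the classification of Hopf $2$-cocycles for connected nilpotent $G$.
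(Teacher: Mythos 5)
Your strategy is the same as the one the paper uses: write the solution as $s+(w-w_{21})+r$ with $s\in\wedge^2\mathfrak{t}$, $w\in\mathfrak{t}\otimes\uu$, $r\in\wedge^2\uu$, expand the CYBE using that $\mathfrak{t}$ is central in $\g$, and sort the resulting terms by which tensor legs lie in $\mathfrak{t}$; the components with at most one $\mathfrak{t}$-leg then give $\mathrm{CYBE}(r)=0$ and $[u_i\otimes 1+1\otimes u_i,r]=0$, while $s$ drops out. Structurally your argument matches the paper's proof exactly, including the grading/non-cancellation device.

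There is, however, one step in your write-up that does not hold as stated: the claim that the ``$[w_{12},w_{13}]$-type expressions vanish'' because the $t_i$-legs are central, so that ``no further constraint among the $u_i$ themselves arises.'' Set $W:=w-w_{21}=\sum_i(t_i\otimes u_i-u_i\otimes t_i)$. The commutator $[W_{12},W_{13}]$ is taken in the \emph{first} leg, and while the three cross terms involving a $\mathfrak{t}$-entry in that leg do die, the fourth survives:
\[
[W_{12},W_{13}]=\sum_{i,j}[u_i,u_j]\otimes t_i\otimes t_j,\qquad
[W_{12},W_{23}]=-\sum_{i,j}t_i\otimes[u_i,u_j]\otimes t_j,\qquad
[W_{13},W_{23}]=\sum_{i,j}t_i\otimes t_j\otimes[u_i,u_j].
\]
These lie in the graded components with exactly two $\mathfrak{t}$-legs, so by your own non-cancellation argument each must vanish separately, and since the $t_i\otimes t_j$ are linearly independent this forces $[u_i,u_j]=0$ for all $i,j$ --- an extra constraint not recorded in the statement. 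A concrete check: take $s=0$, $r=0$, $\uu$ the Heisenberg algebra and $u_1,u_2$ its two noncommuting generators; then $\Image(w-w_{21})$ is not a Lie subalgebra of $\g$, so by Drinfeld's classification (Section~\ref{drinres}) $w-w_{21}$ is not a solution of the CYBE, even though $[u_i\otimes 1+1\otimes u_i,r]=0$ holds vacuously. I should add that the paper's own displayed identity omits exactly these three $[W,W]$ terms, so you have reproduced the paper's computation together with its blind spot; but as a matter of logic this is a gap in your proof, and the clean fix is to carry the $[W_{12},W_{13}]$-type terms through the expansion and record the additional conclusion that the $u_i$ pairwise commute.
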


\begin{proof}
Suppose ${\bf r}\in\wedge^2\g$ is a solution to the CYBE (\ref{cybe}). Fix a basis $\{t_i\}$ of $\mathfrak{t}$, and write ${\bf r}=s+\sum (t_i\otimes u_i -u_i\ot t_i)+r$, where $s\in \wedge^2\mathfrak{t}$, $u_i\in \uu$, and 
$r\in \wedge^2\mathfrak{u}$. Using that $\mathfrak{t}$ is abelian, and $[\mathfrak{t},\uu]=0$, it is straightforward to verify that
\begin{eqnarray*}
\lefteqn{[{\bf r}_{12},{\bf r}_{13}]+[{\bf r}_{12},{\bf r}_{23}]+[{\bf r}_{13},{\bf r}_{23}]}\\
& = &  [r_{12},r_{13}]+\sum [r_1,u_i]\ot t_i\ot r_2 + \sum [u_i,r_1]\ot r_2\ot t_i \\
& + & [r_{12},r_{23}]+\sum r_1\ot [u_i,r_2]\otimes t_i+ \sum t_i\otimes [u_i,r_1]\ot r_2\\
& + &  [r_{13},r_{23}]+\sum t_i\ot r_1\otimes [u_i,r_2]+\sum r_1\ot t_i\otimes [r_2,u_i]\\
& = &  [r_{12},r_{13}]+[r_{12},r_{23}]+[r_{13},r_{23}]\\
& + &  \sum t_i\otimes [u_i\otimes 1+1\otimes u_i,r]+\sum [u_i\otimes 1+1\otimes u_i,r]\otimes t_i\\
& + & \sum \left([r_1,u_i]\ot t_i\ot r_2 + r_1\ot t_i\otimes [r_2,u_i]\right),
\end{eqnarray*}
from which the claim readily follows.
\end{proof}

\begin{remark}\label{rmkbij}
Observe that since a connected nilpotent algebraic group $G$ over $\mathbb{C}$ has only $1-$dimensional irreducible representations, all fiber functors on $\Rep(G)=\Corep(\mathcal{O}(G))$ are classical, so all of them are obtained by twisting using Hopf $2-$cocycles for $\mathcal{O}(G)$.
\end{remark}

\section{Concluding questions}

We conclude with a number of ring-theoretical questions concerning the structure of the algebras $\mathcal{O}(G)_J$, which are motivated by the results of the previous sections.

\begin{question}\label{noeth}
Let $G$ be an affine algebraic group over $\mathbb{C}$, and let $J$ be a Hopf $2-$cocycle for $\mathcal{O}(G)$ with support $H$.

1) Is the algebra $\mathcal{O}(H)_J$ simple? What are the simple algebras which can be realized in this way? 


2) Is the algebra $\mathcal{O}(G)_J$ Noetherian? More generally,  suppose $G$ acts on a Noetherian algebra $A$ with multiplication map $m$ (e.g., $A=\mathcal{O}(X)$, where $X$ is an affine Noetherian scheme such as a scheme of finite type or an algebraic variety), and let $A_J$ be the algebra with multiplication $m\circ J$ (here $J$ is viewed as an operator on $A\otimes A$ defined by the Hopf $2-$cocycle $J$). Is $A_J$ Noetherian? 

3) If $G$ is connected, when is $\mathcal{O}(G)_J$ a domain?

4) Does $\mathcal{O}(G)_J$ have zero Jacobson radical, or at
least zero prime radical?

5) Does $G$ act transitively on the set
of primitive/maximal ideals of $\mathcal{O}(G)_J$? Are primitive ideals of $\mathcal{O}(G)_J$ maximal? (This would follow from $G$ acting transitively on the primitive ideals since any maximal ideal is primitive.)
\end{question}

\begin{remark}\label{concrmk}
Assume $J$ is a Hopf $2-$cocycle of {\em finite rank}. 

1) By \cite{M}, $\mathcal{O}(H)_J\cong Mat_{|H|^{1/2}}(\mathbb{C})$ is simple.

2) The algebra $\mathcal{O}(G)_J$ is a finite module over its center $\mathcal{O}(G/H)$, so the answer to Question 2) is yes.  

3) Question \ref{noeth} 3) is equivalent to the following question: Let $G$ be a connected group, $H$ a finite subgroup of central type, and $J$ a minimal Hopf $2-$cocycle for $H$. Is it true that $\mathbb{C}(G)_J$ (= the field of rational functions on $G$ twisted by $J$) is a division algebra? For example, if $G=\mathbb{G}_m\times \mathbb{G}_m$ and $H=\mathbb{Z}/d\mathbb{Z}\times \mathbb{Z}/d\mathbb{Z}$ then this is true (see Section 4.1). However here is an example where $\mathcal{O}(G)_J$ is not a domain for connected $G$.

Let $G:=\PGL_2(\mathbb{C})$ be the group of automorphisms of the projective line $\mathbb{C}P^1$. Take the subgroup $H:=\mathbb{Z}/2\mathbb{Z}\times \mathbb{Z}/2\mathbb{Z}$ in $G$ generated by the maps $z\mapsto -z$ and $z\mapsto 1/z$ of $\mathbb{C}P^1$ to itself. Let $J$ be the standard twist supported on $H$, and $A:=\mathbb{C}(G)_J$. We claim that $A$ is not a division algebra,
namely it contains zero divisors.

Indeed, let $B$ be the upper triangular subgroup of $G$ (the stabilizer of $\infty$ in $\mathbb{C}P^1$). It suffices to check that
the subalgebra $\mathbb{C}(B\backslash G)_J=\mathbb{C}(z)_J$ has zero divisors. But this is a central simple algebra with center a function field of a (rational) curve $\mathbb{C}(u)$, where $u:=z^2+z^{-2}$. It is well known that there is no nontrivial central division algebras over $\mathbb{C}(u)$ (its Brauer group is trivial).
So $\mathbb{C}(z)_J=Mat_2(\mathbb{C}(u))$, hence contains zero divisors, as claimed.

4) Since $H$ is of central type, $\mathcal{O}(G)_J$ is an Azumaya algebra over $\mathcal{O}(G/H)$, so the answer to Question 4) is yes.

5) The answer to Question 5) is yes for the same reason as in 4).
\end{remark}




\end{document}